\def\NAT@def@citea{\def\@citea{\NAT@separator}}
\theoremstyle{plain}
\newtheorem{theorem}{Theorem}[section]
\newtheorem{lemma}[theorem]{Lemma}
\theoremstyle{definition}
\newtheorem{experiment}[theorem]{Experiment}
\newtheorem{assumption}{Assumption}
\theoremstyle{remark}
\newtheorem{remark}{Remark}
\newcommand\R{\mathbb{R}}
\begin{document}

\articletype{ARTICLE TEMPLATE}

\title{A robust regularized extreme learning machine for regression problems based on self-adaptive accelerated extra-gradient algorithm}

\author{
\name{ Muideen Adegoke\textsuperscript{a}, Lateef O. Jolaoso\textsuperscript{b}\thanks{CONTACT L.O.J. Author. Email: l.o.jolaoso@soton.ac.uk} and Mardiyyah Oduwole\textsuperscript{c}} 
\affil{ \textsuperscript{a}Big Data Technologies and Innovation Laboratory, University of Hertfordshire,  Hatfield AL10 9AB, United Kingdom; \textsuperscript{b}School of Mathematical Sciences,	University of Southampton, SO17 1BJ  United Kingdom; Department of Mathematics and Applied Mathematics, Sefako Makgatho Health Sciences University, P.O. Box 94 Medunsa 0204, Pretoria, South Africa; \textsuperscript{c}ML Collective; Department of Computer Science, National Open University Nigeria, Nigeria.}}

\maketitle

\begin{abstract}
The Extreme Learning Machine (ELM) technique is a machine learning approach for constructing feed-forward neural networks with a single hidden layer and their models. The ELM model can be constructed while being trained by concurrently reducing both the modeling errors and the norm of the output weights. Usually, the squared loss is widely utilized in the objective function of ELM, which can be treated as a LASSO problem and hence applying the fast iterative shrinkage thresholding algorithm (FISTA). However, in this paper, the minimization problem is solved from a variational inequalities perspective giving rise to improved algorithms that are more efficient than the FISTA. A fast general extra-gradient algorithm which is a form of first-order algorithm is developed with inertial acceleration techniques and variable stepsize which is updated at every iteration. The strong convergence and linear rate of convergence of the proposed algorithm are established under mild conditions. In terms of experiments, two experiments were presented to illustrate the computational advantage of the proposed method to other algorithms.
\end{abstract}

\begin{keywords}
Extreme learning machine, variational inequalities, acceleration technique, adaptive algorithm, extragraident algorithms, regression problems.
\end{keywords}

\section{Introduction}\label{Sec1:Introduction}
The extreme learning machine (ELM)~\cite{huang2006extreme} concept is a widely used machine learning method for constructing single-hidden layer feed-forward neural networks (SLFNNs). ELM networks and models are employed in many applications, namely control systems, image denoising, signal processing, classification problems, and regression problems~\cite{ding2014extreme}.  The concept explored in the  ELM techniques involves feeding-forward input data to the network's output. The ELM network transforms the input data into new features in another space using randomly generated weights and biases. In the ELM concept, an activation function then processes these new features, allowing for the extraction of non-linear features which can then be used by Output weight for making prediction

As aforementioned, in ~\cite{huang2006extreme}  extreme learning machine (ELM)  is proposed as a solution for constructing single-hidden layer feedforward networks (SLFNs). It has the capacity to approximate nonlinear functions of input data of various data representations. Unlike traditional methods, ELM randomly generates the parameters of hidden nodes, the weight, and the bias between the input layer and the hidden layer, and once the parameters are generated, they are left untunned throughout the training of the ELM network/model and only the output weights are learned. It has been proven that ELM exhibits the universal approximation capability~\cite{huang2006universal}. ELM offers several merits, these include easy and fast implementation, fast training speed, and generalization performance. As a result, ELM has gained significant interest in various regression problems, such as stock market forecasting \cite{zhao2012online}, price forecasting \cite{chen2012electricity}, wind power forecasting \cite{wan2013probabilistic}, and affective analogical reasoning \cite{cambria2015elm}. 

Despite the popularity of the classical ELM network/model,  the training of the classical ELM  can easily overfit if proper care is not taken. To address this issue, several versions of classical algorithms to train ELM networks were proposed. For instance,  Deng et al. \cite{deng2009regularized} proposed a regularized ELM with weighted least squares to enhance robustness. Their algorithm consists of two stages of reweighted ELM. Zhang et al. \cite{zhang2015outlier} introduced an outlier-robust ELM utilizing the $l_1$-norm loss function and the $l_2$-norm regularization term. They employed the augmented Lagrange multiplier algorithm to effectively reduce the influence of outliers. Horata et al. \cite{horata2013robust} adopted the Huber function to enhance the robustness and utilized the iteratively reweighted least squares (IRLS) algorithm to solve the Huber loss function without a regularization term. However, models without regularization are prone to overfitting.

Furthermore, despite the advancements made by existing robust ELM regression methods, namely, utilizing $l_1$-norm or Huber function, they are still susceptible to outliers with large deviations due to the linearity of these loss functions with respect to deviations. Besides, existing methods employ only $l_2$-norm regularization or lack a regularization term altogether. When the number of hidden nodes is large, the $l_2$-norm regularization tends to train a large ELM model due to the non-zero output weights of the network. 

Also, it should be noted that in the Extreme Learning Machine (ELM) approach, the output weights can be determined analytically using a Moore-Penrose generalized inverse. Conversely, the input weights and biases are randomly generated. However, if the output matrix is problematic, it can result in issues such as overfitting, inaccurate estimation, and unstable behavior. To overcome these challenges, a regularization term and the squared loss of prediction errors are often incorporated, forming a regularized objective function. This approach effectively addresses the problems associated with overfitting and ill-posed situations, thereby enhancing the predictive capability of ELM. Commonly employed regularization terms include the L2 norm and L1 norm-based terms. Generally, the latter proves more advantageous when dealing with ill-posed problems since it tends to produce sparser and more resilient output weights.

Therefore, this study addresses this gap by focusing on the robust ELM regression problem with the $l_2$ loss function and $l_1$regularization term. Considering the challenges posed by outliers and the need for appropriate regularization techniques, we formulated the problem as a variational inequality problem and proposed a new learning algorithm that can enhance the performance of the ELM network/model under outlier situations. In the paper, we consider non-linear regression problems as case studies to validate the performance of the proposed algorithm and approach. Further details are given in the remaining body of this article.

The remaining sections of the papers are as follows, Section 1 presents an introduction and general background detail of the work. Section 2 describes the ELM concept and model development. Furthermore, presents variational inequality and formulation. Section 3, presents the proposed algorithm. Besides, the convergence results and analysis of the proposed algorithm are detailed in this section. Similarly, Section 4 details numerical experiments which include comparisons of algorithms, and details of data use to mention a few. The conclusion is presented in Section 5.

\section{Extreme learning machine (ELM)}
The classical Extreme learning machine network is a three-layer feed-forward neural network (FFNN). It is widely used for many applications ranging from regression tasks, classification tasks,  feature extraction, and dimensional reduction among others. Figure \ref{ELM_figure} shows the structure of a single-hidden layer feed-forward neural network. It consists of an input layer, a hidden layer, and the output layer. Also, between the input layer and the hidden layer are the weight and bias denoted by $\mathbf{W}$ and $\mathbf{b}$ respectively. Similarly, between the hidden layer and the output layer is the output weight $\mathbf{\beta}$. The input layer takes the input data set represented by $\mathbf{X}$ and transforms it using the randomly generated weight and bias. The transformed input is given by
\begin{eqnarray}
    \bar{\mathbf{X}} = \mathbf{X}\mathbf{W} + \mathbf{b}.
\end{eqnarray}
The transformed input $\bar{\mathbf{X}}$ is passed to the hidden layer which contains an activation function such as the Sigmoid function which is given by
\begin{eqnarray}
    \sigma(x) = \frac{1}{1 + \exp^{-x}}.
\end{eqnarray}
Note that the activation function introduces non-linearity to its input. Thus, the output of the hidden layer  is given by

\begin{eqnarray}
    \mathbf{H} = \sigma(\bar{\mathbf{X}}).
\end{eqnarray}
\begin{figure}
    \centering
    \includegraphics[width=\columnwidth]{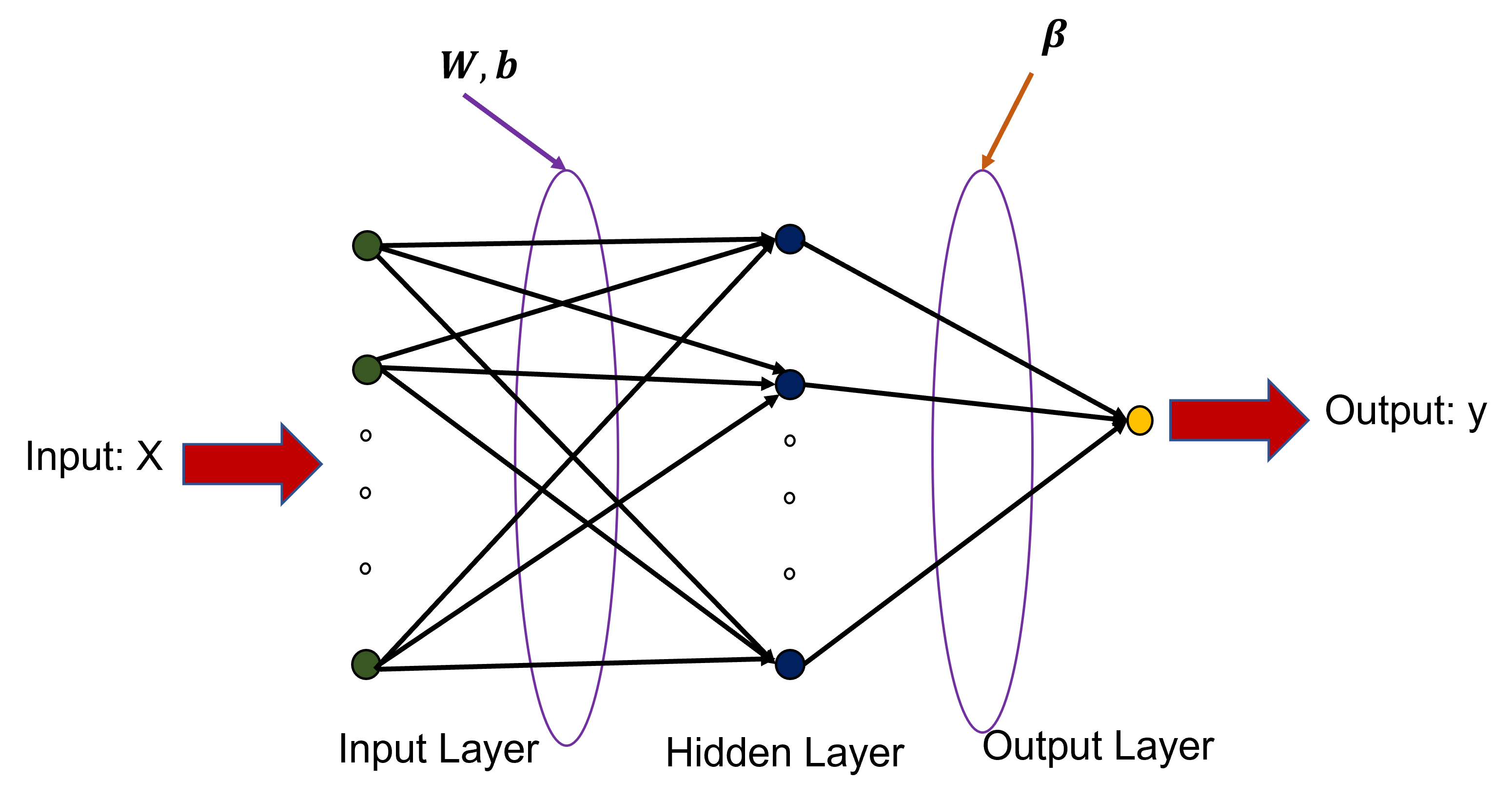}
    \caption{The figure of a Single Layer based ELM}
    \label{ELM_figure}
\end{figure}

\begin{figure}
    \centering
    \caption{Caption}
    \label{fig:enter-label}
\end{figure}

\noindent The output layer computes the output of the network using the output of the hidden layer and the output weight. The output of the network $\mathbf{o}$ is given by
\begin{eqnarray}
    \mathbf{o} = \mathbf{H}\mathbf{\beta}.
\end{eqnarray}
An important step in any machine learning problem is the training of the model. In the classical ELM concept, the output weight is obtained using the least square method. In other words, in the traditional ELM,  to obtain optimal output weight $\beta^*$, the least square method was employed. In the next subsection, we dive deep into the details of the problem this paper considered and the classical way to obtain optimal output weight $\beta^*$.


\subsection{Prediction problem to be considered}
In this paper, we consider a non-linear regression problem. Given training data sets denoted as $\mathbb{X}_{train}=\{(\boldsymbol{x_n},y_n): \, \boldsymbol{x}_n \, \in \, \mathbb{R}^D, y_n \, \in \mathbb{R}, \quad n=1,\ldots,N \}$, Here, $D$ represents the number of input features, $N$ represents the number of training samples, and $\boldsymbol{x_n}$ and $y_n$ represent the inputs and target outputs, respectively, for the $k$th sample. Similarly, the test set is denoted as follows $\mathbb{X}_{test}=\{\boldsymbol{x}'_{n'},y'_{n'}):\, \boldsymbol{x}'_{n'} \, \in \, \mathbb{R}^{D'}, y'_{n'} \in \mathbb{R}, \quad n'=1,\ldots,N' \}$. In this case, $N'$ represents the number of samples in the test set. When considering a neural network with $m$ hidden nodes, the output of a Single Layer Feed-forward Network (SLFN) can be expressed as follows
\begin{equation}
\label{eq1}
g_m(\boldsymbol{x}_n)=\sum_{j=1}^m \beta_j h_j(\boldsymbol{x}_n),
\end{equation}
where $\beta_j$ is the $j$th output weight and $h_j(\cdot)$ is the output of the $j$th hidden node or activation function. By grouping all the weights and hidden nodes, then (\ref{eq1}) can be written in a compact form as
\begin{equation}
\label{eq2}
g_m(\boldsymbol{x}_n)=\boldsymbol{h}_m(\boldsymbol{x}_n)\boldsymbol{\beta},
\end{equation}
where $\boldsymbol{\beta}=\{ \beta_1,\beta_2,\cdots,\beta_m \}$, $\boldsymbol{h}_m(\boldsymbol{x}_n)=\{h_1(\boldsymbol{x}_n),h_2(\boldsymbol{x}_n),\cdots,h_m(\boldsymbol{x}_n)\}$. There are several activation functions, for instance, in the case of the sigmoid activation function, $h_j(\cdot)$ is given by
\begin{equation}\label{eq3}
h_{j}(\boldsymbol{x}_n)=\frac{1}{1+\exp^{-(\boldsymbol{a}_j^T\boldsymbol{x}_n + b_j)}}=\frac{1}{1+\exp^{(-\boldsymbol{w}_j^T\boldsymbol{o})}},
\end{equation}
where $\boldsymbol{w}_j=[\boldsymbol{a}_j^T,b_j]^T$ denotes grouped input weight~($\boldsymbol{a}_j$), and bias~($b_j$), at the $j$th hidden node. $\boldsymbol{o}=[\boldsymbol{x}_n^T,1]^T$ representing input sample. Given all training data sets, the output of the activation function is given by

\begin{equation}\label{eq4}
  \boldsymbol{H}=  \left(
                               \begin{array}{ccc}
                                 h_1(\boldsymbol{x}_1) & \cdots & h_m(\boldsymbol{x}_1) \\
                                 \vdots              & \ddots                & \vdots \\
                                 h_1(\boldsymbol{x}_N) & \cdots & h_m(\boldsymbol{x}_N) \\
                               \end{array}
                    \right).
\end{equation}
For notation simplification, throughout the rest of the discussion, we would refer to $g_m(\boldsymbol{x}_n)$ as $g_m$. Also, $\boldsymbol{h}_m(\boldsymbol{x}_n)$, $h_j(\boldsymbol{x}_n)$ as $\boldsymbol{h}_m$ and $h_j$ respectively.
The training set error (the difference between the network output and training target) is given by
\begin{equation}
\label{eq5}
    \xi = \sum_{n=1}^N (y_n-g_m\beta_m)^2 = \left \| \boldsymbol{y}- \boldsymbol{H}\boldsymbol{\beta} \right \|_2^2
\end{equation}
where $\boldsymbol{y}=\{ y_1,y_2,\cdots,y_N \}$. As aforementioned, ELM uses the least square approach to obtain its  output weight, hence, the optimal output weight that minimizes the training set error $\xi$ is given by
\begin{equation}\label{eq6}
\boldsymbol{\beta}^*= \left ( \boldsymbol{H}^T \boldsymbol{H} \right)^{-1}\boldsymbol{H}^T \boldsymbol{T}.
\end{equation}
Nevertheless, achieving a sparse optimal output weight requires a reformulation of the objective function \ref{eq5} due to the dense nature of the optimal weight \ref{eq6}. This reformulation aims to effectively handle the sparse weight provided by the new objective function and it is given by
\begin{eqnarray} \label{eq11}
   J_{elm} = \underset{\boldsymbol{\beta}}{\min} \;  \| \boldsymbol{y}- \mathbf{H}\boldsymbol{\beta}\|_{2}^2 + \lambda\|\boldsymbol{\beta}\|_1.
\end{eqnarray}
where $\lambda$ is a regularization parameter. The ELM task thus becomes solving the optimization problem \eqref{eq11} to obtain the output weight $\beta$. In the next subsection, we approach solving the optimization problem \eqref{eq11} via the variational inequalities technique which is a general framework and classical method in the literature.

\subsection{A variational inequalities approach}
{In this subsection, we lay some foundation for the proposed learning algorithm. The algorithm is then developed and employed to train an extreme learning machine network in the further section. The algorithm is developed based on a variational inequalities model formulated via standard convex optimization. }

For a given hidden layer output matrix $\textbf{H}\in \R^{N \times m}$, output weight $\boldsymbol{\beta} \in \R^m$ and target output $y \in \R^N,$ the model \eqref{eq11} can be reformulated as optimization problem of the form 
\begin{align}\label{min}
  & \underset{\beta}{{\min}}\; \mathcal{E}(\boldsymbol{\beta}):= \iota_{\mathcal{K}}(\beta)+ \|\boldsymbol{y} - \mathbf{H}\boldsymbol{\beta}\|_{2}^2, 
\end{align}
where $\iota_{\mathcal{K}}$ is the indicator function on the set $\mathcal{K}$ defined by $\{ \boldsymbol{\beta} \in \mathbb{R}^N \; | \; \|\boldsymbol{\beta}\|_{1} \leq 1 \}.$
Denote by $\boldsymbol{\beta}^* := \inf_{\boldsymbol{\beta} \in \mathcal{K}}\mathcal{E}(\boldsymbol{\beta}),$ the optimal objective value of \eqref{min}. Notice that $\mathcal{E}$ is a convex objective function with continuous gradient $\nabla \mathcal{E}(\boldsymbol{\beta}) = \textbf{H}^\top(\textbf{H}\boldsymbol{\beta} - \boldsymbol{y}),$ which is Lipschitz continuous. In order to solve the optimization from \eqref{min}, we adopt a generic framework using the concept of variational inequalities. The variational inequalities is a classical and general framework that encompasses a wide variety of optimization problems such as convex minimization, equilibrium problems, convex-concave saddle point problems, and inclusion problems, which are ubiquitous in machine learning and optimization ( \cite{Nemi2004, Judi2011, Judi&Nemi2016}). Given a convex subset $\mathcal{K}$ of $\mathbb{R}^N,$ these inequalities are  often desgined from a monotone operator $F:\mathcal{K} \to \mathbb{R}^N$ such that for any $(x,y) \in \mathcal{K} \times \mathcal{K},$ $(F(x)- F(y))^\top \cdot (x-y) \geq 0.$ The goal is then to find a solution $x^\dagger \in \mathcal{K}$ to the variational inequalities, that is, such that 
\begin{equation}
    \forall x \in \mathcal{K}, \quad F(x^\dagger)^\top\cdot (x - x^\dagger) \leq 0. \label{vip}
\end{equation} For the case of the reformulated problem \eqref{min}, the operator $F$ is simply the gradient of $\mathcal{E}.$ While our motivation is to solve the convex minimization arising in \eqref{min}, the variational inequalities framework is more general (see, e.g., \cite{Nemi2004, Bach&Levy2019} and reference therein). In this paper, we are interested in the algorithm to solve the inequality in \eqref{vip}, while only accessing an oracle for $F(x)$ for any given $x \in \mathcal{K},$ or only an unbiased estimate of $F(x)$. We also assume that we may efficiently project onto the set $\mathcal{K}$ (which we will assume is nonempty, closed, and convex throughout this paper). In terms of complexity bounds, this problem is by now well understood with matching upper and lower bounds in a variety of situations. In particular, the notion of smoothness (i.e., Lipschitz-continuity of $F$ vs. simply assuming that $F$ is bounded) and the presence of noise are two important factors influencing the convergence rate. For example, in the projection gradient algorithm of \cite{fac,guler, gold}, the convergence of the method depends heavily on the parameter of strong monotonicity of $F$ which in general yields a weak convergence iterate. Also in the extragradient methods of \cite{cens1,cens3,korp}, the algorithms depend heavily on a prior estimate of the Lipschitz constant of $F$ while requiring two projections onto the set $\mathcal{K}$. Both requirements make the algorithms not to be best suitable for solving variational inequalities in complex cases. Other modifications of the above-listed methods can be found in \cite{cens4, Malik, sol, yang} and references therein. Let us mention in particular, the method due to Tseng \cite{tseng} which requires a single projection onto the set $\mathcal{K}$ per each iteration. This method exploits the value of $F$ at the previous and current iterations of the algorithm. Although the original method depends on the prior estimate of the Lipschitz constant of $F$, other improved versions of the method have been introduced in, for instance, \cite{Sun, Thong1, Thong2, CaiYekini}. 

The choice of an adequate stepsize for an algorithm for solving the variational inequalities has attracted effort from many researchers in recent years. Apart from the stepsize depending on the Lipschitz constant of $F$ as mentioned earlier, an initial guess can be too small or too large which slows down the convergence rate of the algorithm. In order to overcome this challenge, a universal approach which is determined via a self-adaptive process has been considered in many instances, see, e.g. \cite{Jol1, Jol2, Jol3}. This universal approach is bounded by a lower and an upper bound which is updated at every iteration. Furthermore, the choice of inertial term which originated from heavy ball discretization of a second-order dynamical system has been considered as an acceleration technique for several algorithms \cite{BeckFISTA, Moudafi, Cham, Polyak}. This approach updates the next iterate by using a memory of the previous two iterates of the algorithm, see, e.g. \cite{Jol1, Jol2, Jol3, Sun}. In this paper, we make the following contributions in terms of the proposed algorithm:
\begin{itemize}
    \item We present a general adaptive accelerated method for variational inequalities based on the extragradient method in \cite{tseng}. Our method employs a simple self-adaptive choice of stepsize that leads to an improved rate of convergence for the algorithm. Moreover, our algorithm does not require prior knowledge regarding the Lipschitz constant of $F$.
    \item We accelerate the performance of the algorithm by incorporating double inertial extrapolation steps which helps to speed up the convergence rate of the algorithm.
    \item We also include a relaxation parameter that tunes the iteration of the algorithm towards the solution of the variational inequalities.
    \item On the technical side, we provide the convergence analysis and linear convergence rate of the proposed algorithm.
\end{itemize}

\subsection{Mathematical tools}
In this subsection, we recall some mathematical tools and notations needed in this paper. We denote by $\|\cdot\|$, the induced norm on $\R^n$ and $\langle \cdot,\cdot \rangle$, the inner product on $\R^n \times \R^n.$ The weak convergence of a sequence $\{x_n\}$ to $x$ as $n \to \infty$ is denoted by $x_n \rightharpoonup x$ and the strong convergence of $\{x_n\}$ to $x$ as $n \to \infty$ by $x_n \to x.$ More so, $\omega(x_n)$ denotes the set of weak accumulation point $x_n.$ For each $x,y \in \R^n,$ it is clear that
\begin{itemize}
    \item $\|x+y \|^2 \leq \|x\|^2 + 2 \langle y,x+y \rangle,$
    \item $\|x-y\|^2 = \|x\|^2 - 2\langle x,y \rangle + \|y\|^2.$
\end{itemize}
Also for any $t \in [0,1],$ we have
\begin{equation}
    \|tx + (1-t)y\|^2 = t\|x\|^2 + (1-t)\|y\|^2 - t(1-t)\|x-y\|^2. 
\end{equation}
For a nonempty, closed and convex subset $\mathcal{K} \subset \R^n,$ the nearest point of $x \in \R^n$ to $\mathcal{K}$ (called projection onto $\mathcal{K}$) is the unique vector $P_{\mathcal{K}}x$ satisfying
\begin{equation}
    \|x - P_{\mathcal{K}}x\| \leq \|x - y \| \quad \forall y \in \mathcal{K}.
\end{equation}
It is well known that $P_{\mathcal{K}}$ is Lipschitz continuous with Lipschitz constant 1 and satisfies the following identity:
\begin{equation}
    u  = P_{\mathcal{K}}x  \Leftrightarrow \langle x - u, u - w \rangle \geq 0\quad \forall w \in \mathcal{K}. \label{proj}
\end{equation}
In addition, $P_{\mathcal{K}}$ is characterized by the following inequality:
\begin{equation}
    \|P_{\mathcal{K}}x - u\|^2 \leq \|x - u \|^2 - \| x - P_{\mathcal{K}}x\|^2 \quad u \in \mathcal{K}.
\end{equation}
 
\begin{lemma}\cite{Att}
       Let $\{\phi_{n}\}, \{\alpha_{n}\}$ and $\{\psi_{n}\}$ be sequences of nonnegative real number satisfying 
       \begin{equation}
    \phi_{n+1} \leq \phi_{n} + \alpha_n (\phi_n - \phi_{n-1}) + \psi_n, \quad \forall n \geq 1, \; \sum_{n=1}^{+\infty}\psi_n < + \infty,
\end{equation}
and  such that $0 \leq \alpha_{n} \leq \alpha < 1$ for all $ n \geq 1.$ Then the limit $\lim_{n\to + \infty} \phi_{n} \in \R$ exists.
   \end{lemma}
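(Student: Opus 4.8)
The plan is to control the positive parts of the increments $\phi_n-\phi_{n-1}$, show that they are summable, and then exhibit $\{\phi_n\}$ as a convergent sequence plus a convergent series. First I would set $\theta_n:=\max\{\phi_n-\phi_{n-1},0\}\ge 0$. Since $0\le\alpha_n\le\alpha<1$ and $\alpha_n(\phi_n-\phi_{n-1})\le\alpha_n\theta_n\le\alpha\theta_n$ (the first inequality being trivial when $\phi_n-\phi_{n-1}<0$ and immediate otherwise), the hypothesis gives
\begin{equation}
\phi_{n+1}-\phi_n\le\alpha\theta_n+\psi_n,\qquad n\ge 1.
\end{equation}
Taking positive parts yields the scalar recursion $\theta_{n+1}\le\alpha\theta_n+\psi_n$, and iterating it gives
\begin{equation}
\theta_{n+1}\le\alpha^n\theta_1+\sum_{j=1}^n\alpha^{n-j}\psi_j,\qquad n\ge 1.
\end{equation}

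Next I would sum this over $n$ and interchange the order of summation in the double sum (legitimate since all terms are nonnegative):
\begin{equation}
\sum_{n=1}^{\infty}\theta_{n+1}\le\theta_1\sum_{n=1}^{\infty}\alpha^n+\sum_{j=1}^{\infty}\psi_j\sum_{n=j}^{\infty}\alpha^{n-j}=\frac{\theta_1}{1-\alpha}+\frac{1}{1-\alpha}\sum_{j=1}^{\infty}\psi_j<+\infty,
\end{equation}
where finiteness uses $\sum_{j=1}^{\infty}\psi_j<+\infty$. Hence $\sum_{n=1}^{\infty}\theta_n<+\infty$, and therefore $\sum_{n=1}^{\infty}(\alpha\theta_n+\psi_n)<+\infty$.

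Finally I would define $\delta_n:=\phi_n-\sum_{k=1}^{n-1}(\alpha\theta_k+\psi_k)$, with the empty sum equal to $0$. The increment bound above shows $\delta_{n+1}\le\delta_n$, so $\{\delta_n\}$ is nonincreasing; and since $\phi_n\ge 0$ and the series $\sum_k(\alpha\theta_k+\psi_k)$ converges, $\{\delta_n\}$ is bounded below. Consequently $\lim_{n\to+\infty}\delta_n$ exists in $\mathbb{R}$, and because $\sum_{k=1}^{n-1}(\alpha\theta_k+\psi_k)$ converges as $n\to+\infty$, the identity $\phi_n=\delta_n+\sum_{k=1}^{n-1}(\alpha\theta_k+\psi_k)$ forces $\lim_{n\to+\infty}\phi_n$ to exist in $\mathbb{R}$, which is the claim.

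The argument is entirely elementary; the only steps needing care are the positive-part reduction (verifying that the inertial term $\alpha_n(\phi_n-\phi_{n-1})$ is correctly majorized by $\alpha\theta_n$ in both sign cases) and the rearrangement of the geometric double sum. Neither is a real obstacle, but both should be stated cleanly so that the telescoping/monotonicity conclusion for $\{\delta_n\}$ is airtight.
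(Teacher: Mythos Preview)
Your argument is correct. The paper does not prove this lemma at all; it simply cites it from Alvarez--Attouch \cite{Att} and uses it as a black box. Your proof is in fact the standard one from that reference: control the positive part $\theta_n=[\phi_n-\phi_{n-1}]_+$, iterate the contraction $\theta_{n+1}\le\alpha\theta_n+\psi_n$ to get summability of $\{\theta_n\}$, and then subtract a convergent series to obtain a nonincreasing, bounded-below auxiliary sequence. One cosmetic point: $\theta_1\sum_{n\ge 1}\alpha^n=\frac{\alpha\theta_1}{1-\alpha}$ rather than $\frac{\theta_1}{1-\alpha}$, but since $\alpha<1$ your stated bound is still valid and the conclusion is unaffected.
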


\begin{lemma}\label{viplemma}\cite{Taka}
    Let $\mathcal{K}$ be a nonempty, closed and convex subset of $\mathbb{R}^N.$ Suppose that $F: \mathcal{K} \to \R^n$ is continuous and monotone. Then for any $p \in \mathcal{K},$ we have
    \begin{equation}
        p \in VI(F,\mathcal{K}) \Leftrightarrow \langle Fx, x - p \rangle \geq 0, \quad \forall x \in \mathcal{K}.
    \end{equation}
\end{lemma}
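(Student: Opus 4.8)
The statement is the classical Minty lemma, characterizing solutions of the (Stampacchia) variational inequality $VI(F,\mathcal{K})$ --- here understood as the set of $p \in \mathcal{K}$ with $\langle Fp, x - p \rangle \geq 0$ for all $x \in \mathcal{K}$ --- in terms of the associated Minty (``dual'') inequality $\langle Fx, x - p \rangle \geq 0$ for all $x \in \mathcal{K}$. The plan is to prove the two implications separately and to record which hypothesis each direction consumes: the forward implication uses only monotonicity of $F$, whereas the converse uses only continuity of $F$ together with convexity of $\mathcal{K}$.

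For ($\Rightarrow$), suppose $p \in VI(F,\mathcal{K})$ and fix $x \in \mathcal{K}$. Monotonicity of $F$ gives $\langle Fx - Fp, x - p \rangle \geq 0$, and adding the defining inequality $\langle Fp, x - p \rangle \geq 0$ yields $\langle Fx, x - p \rangle \geq 0$. Since $x \in \mathcal{K}$ was arbitrary, the Minty inequality holds.

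For ($\Leftarrow$), suppose $\langle Fx, x - p \rangle \geq 0$ for all $x \in \mathcal{K}$ and fix an arbitrary $x \in \mathcal{K}$. The key device is to test the hypothesis along the segment joining $p$ and $x$: for $t \in (0,1]$ set $x_t := (1-t)p + t x$, which lies in $\mathcal{K}$ by convexity, so that $x_t - p = t(x-p)$. Applying the hypothesis at $x_t$ gives $t\langle Fx_t, x - p \rangle = \langle Fx_t, x_t - p \rangle \geq 0$, hence $\langle Fx_t, x - p \rangle \geq 0$. Letting $t \to 0^{+}$ we have $x_t \to p$, and continuity of $F$ gives $Fx_t \to Fp$; passing to the limit yields $\langle Fp, x - p \rangle \geq 0$. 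As $x \in \mathcal{K}$ was arbitrary, $p \in VI(F,\mathcal{K})$.

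There is no substantive obstacle here --- the argument is short and standard. The only point requiring any care is the converse direction, where one must keep the test points $x_t$ feasible (guaranteed precisely by convexity of $\mathcal{K}$) before invoking continuity of $F$ to pass to the limit as $t \to 0^{+}$. No compactness, reflexivity, or structure beyond the stated hypotheses is needed, and the finite-dimensional setting plays no essential role.
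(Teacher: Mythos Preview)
Your proof is correct and is exactly the standard argument for the Minty lemma. Note, however, that the paper does not supply its own proof of this statement: the lemma is quoted with a citation to Takahashi's monograph \cite{Taka} and is used as a black box in the convergence analysis. What you have written is precisely the textbook proof one finds in that reference (and elsewhere), so there is nothing to compare against beyond observing that your argument matches the classical one --- monotonicity for the forward direction, and the segment trick $x_t=(1-t)p+tx$ with continuity for the converse.
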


We recall two notions of linear convergence, which will be used in our convergence analysis. For a sequence $\{s_n\}$, we say that $\{s_n\}$ converges Q-linearly to $s^*$ if there exist $\alpha \in (0,1)$, $\rho >0$ and $n_0>0$ such that
\begin{equation*}
    \|s_{n+1} - s^*\| \leq \alpha \|s_n - s^* \| \quad \forall  n \geq n_0,
\end{equation*}
and we say that $\{s_n\}$ converges R-linearly to $s^*$ if 
\begin{equation*}
    \|s_n - s^*\| \leq \rho \alpha^n \quad  \forall n \geq n_0.
\end{equation*}

\section{Proposed algorithm and convergence results}
In this section, we propose our algorithm and investigate the convergence analysis of the proposed method. In the sequel, we assume that the following assumptions hold on the cost operator $F$.

\begin{assumption}\label{as1}
    The operator $F: \R^n \to \R^n$ is monotone and Lipschitz continuous, i.e.,  
    \begin{equation*}
        \langle Fx - Fy, x - y \rangle \geq 0 \qquad \text{and} \quad \|Fx - Fy\| \leq L\|x-y\| \quad \forall x,y \in \R^n \; \text{and} \; L>0.
    \end{equation*}
\end{assumption}
\begin{assumption}\label{as2}
    The solution set $VI(F,\mathcal{K})$ is nonempty.
\end{assumption}
\noindent Note that Assumption \ref{as1} is satisfied by the cost operator $\nabla \mathcal{E}(\cdot) = \mathbf{H}^\top (\mathbf{H}(\cdot)-y)$ from problem \eqref{min}. In particular $\nabla \mathcal{E}$ is $\|\mathbf{H}^\top \mathbf{H}\|$-Lipschitz continuous. More so, Assumption \ref{as2} is typical for solving variational inequalities. Now we present our algorithm as follows.

\subsection{Proposed algorithm}
\begin{algorithm} \label{game}
  \caption{A general adaptive accelerated method (GAME)}
  \begin{algorithmic}
  \STATE {\bf Inputs:} Given starting points $x_{0},x_{-1} \in \R^n$, sequence $\{\alpha_n\},\{\beta_n\} \subset (0,1),$ a relaxation parameter $\rho \in [0,1],$ $\mu \in (0,1),$ $\lambda_0>0$ and $\{\zeta_n\}\subset (0,\infty).$ 
  \STATE {\bf Main iterate:} Set $n = 1,2, \dots,$ do: \\
  \texttt{Step 1:} Compute
  \begin{align*}
      & a_n = s_n + \alpha_n (s_n - s_{n-1})\\
      & b_n = s_n + \beta_n (s_n - s_{n-1})\\
      & c_n = P_{\mathcal{K}}(b_n - \lambda_n F(b_n))\\
      & s_{n+1} = (1-\rho)a_n + \rho (c_n - \lambda_n (F(c_n) - F(b_n))).
  \end{align*}
  \texttt{Step 2:} Update the stepsize via
  \begin{equation*}
      \lambda_{n+1} =  \begin{cases}
          \min \left\{ \frac{\mu \|b_n - c_n \|}{\|F(b_n) - F(c_n)\|}, \lambda_{n} + \zeta_n  \right\} \qquad & \text{if} \; F(b_n) \neq F(c_n),\\
          \lambda_n + \zeta_n, & \text{otherwise}.
      \end{cases}
  \end{equation*}
  \texttt{Step 3:} If a stopping criterion is not met, then set $n := n+1$ and goto \texttt{Step 1}.\\
  \textbf{Output:} $s_{n+1} \in \mathcal{K},$ which is the approximate weight $\beta$ of the ELM network in \eqref{eq11}.
  \end{algorithmic}
\end{algorithm}


Before we start the convergence analysis, we first check for a possible stopping criterion which will be used in Step 3 of the proposed method. In particular, we used $\|b_n - c_n \|=0$ as a possible stopping condition which guarantees that the sequence $\{s_n\}$ generated by Algorithm \ref{game} converges to a solution of the variational inequalities.


\begin{lemma}\label{Res1}
    Let $c_n=b_n$ for some $n \geq 0$ in Algorithm \ref{game} happened, then $b_n$ is a solution of the variational inequality problem, i.e., $b_n \in VI(F,\mathcal{K}).$
\end{lemma}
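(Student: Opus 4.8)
The plan is to exploit the fact that $c_n$ is defined as a projection, so the hypothesis $c_n = b_n$ turns $b_n$ into a fixed point of the map $x \mapsto P_{\mathcal{K}}(x - \lambda_n F(x))$, and it is classical that such fixed points are exactly the solutions of the variational inequality. Concretely, first I would observe that $b_n = c_n = P_{\mathcal{K}}(b_n - \lambda_n F(b_n)) \in \mathcal{K}$, so membership in $\mathcal{K}$ is automatic.

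Next I would invoke the projection characterization \eqref{proj}: writing $u := b_n$ and $x := b_n - \lambda_n F(b_n)$, the identity $u = P_{\mathcal{K}}x$ gives
\begin{equation*}
    \langle b_n - \lambda_n F(b_n) - b_n,\; b_n - w \rangle \geq 0 \qquad \forall w \in \mathcal{K},
\end{equation*}
that is, $-\lambda_n \langle F(b_n),\, b_n - w \rangle \geq 0$ for all $w \in \mathcal{K}$. Since $\lambda_0 > 0$ and the update rule in \texttt{Step 2} keeps $\lambda_n > 0$ for every $n$, dividing by $\lambda_n$ yields $\langle F(b_n),\, w - b_n \rangle \geq 0$ for all $w \in \mathcal{K}$, which is precisely the statement $b_n \in VI(F,\mathcal{K})$.

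There is essentially no hard step here: the argument is a one-line consequence of the variational characterization of the metric projection, and the only thing to be careful about is to record that $\lambda_n$ stays strictly positive (so that the scaling by $\lambda_n^{-1}$ is legitimate) and that $b_n$ indeed lies in $\mathcal{K}$. If one prefers, the conclusion can alternatively be phrased through Lemma \ref{viplemma}, since $F$ is continuous and monotone under Assumption \ref{as1}, but this is not needed for the forward implication used here.
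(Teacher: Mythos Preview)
Your proposal is correct and follows essentially the same route as the paper: both arguments apply the projection characterization \eqref{proj} to the identity $b_n = P_{\mathcal{K}}(b_n - \lambda_n F(b_n))$ to obtain $\lambda_n \langle F(b_n), w - b_n \rangle \geq 0$ for all $w \in \mathcal{K}$, and then divide through by the positive scalar $\lambda_n$. Your justification of $\lambda_n > 0$ directly from the update rule is in fact slightly cleaner than the paper's appeal to $\lim_{n\to\infty}\lambda_n > 0$, since the lemma concerns a fixed index $n$.
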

\begin{proof}
    From the characterization inequality of the projection mapping in \eqref{proj} (noting that $c_n =b_n = \mathcal{P}_{\mathcal{K}}(b_n - \lambda_n F(b_n)) $), then we have
    \begin{equation*}
        \langle b_n - u, b_n - \lambda_n F(b_n) - b_n \rangle \geq 0, \qquad \forall u \in \mathcal{K}. 
    \end{equation*}
    Hence $$ \lambda_n \langle F(b_n),u-b_n \rangle \geq 0, \qquad \forall u \in \mathcal{K}.$$
    Let $\lim_{n \to \infty}\lambda_n = \lambda>0,$ then we obtain $\langle F(b_n), u - b_n\rangle \geq 0$ for all $u \in \mathcal{K}.$ This implies that $b_n \in VI(F,\mathcal{K}).$
\end{proof}

In the proof of Lemma \ref{Res1}, we used the fact that $\lim_{n\to\infty}\lambda_n = \lambda >0.$ We justify this fact in the next result and show that indeed, this limit exists and $\{\lambda_n\}$ is nonincreasing.
\begin{lemma}
    The sequence $\{\lambda_n\}$ generated in Step 2 of Algorithm \ref{game} is nonincreasing and $\lim_{n\to \infty}\lambda_n = \lambda \in \left[ \min \left\{ \lambda_0, \frac{\mu}{L}\right\}, \lambda_0 + \zeta_{0} \right]$, where $L$ is the Lipschitz constant of $F.$
\end{lemma}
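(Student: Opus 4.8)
The plan is to split the verification into three independent pieces: a uniform strictly positive lower bound for $\lambda_n$, the existence of $\lim_n \lambda_n$, and the stated two-sided localization of that limit (assuming, as is standard for this type of self-adaptive rule and surely imposed globally elsewhere in the paper, that $\sum_{n}\zeta_n<\infty$). For the lower bound the key observation is Lipschitz continuity from Assumption \ref{as1}: whenever $F(b_n)\neq F(c_n)$ we have $\|F(b_n)-F(c_n)\|\leq L\|b_n-c_n\|$, hence $\frac{\mu\|b_n-c_n\|}{\|F(b_n)-F(c_n)\|}\geq \frac{\mu}{L}$. Since the competing argument of the minimum in Step 2 satisfies $\lambda_n+\zeta_n\geq\lambda_n$, this forces $\lambda_{n+1}\geq\min\{\mu/L,\lambda_n\}$ in the first branch, and trivially $\lambda_{n+1}=\lambda_n+\zeta_n\geq\lambda_n\geq\min\{\mu/L,\lambda_n\}$ in the second branch. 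A one-line induction then gives $\lambda_n\geq\min\{\lambda_0,\mu/L\}$ for all $n$.

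For convergence, the subtlety is that the additive term $\zeta_n$ keeps $\{\lambda_n\}$ from being monotone in the literal sense, so I would pass to the auxiliary sequence $\theta_n:=\lambda_n+\sum_{k\geq n}\zeta_k$. From $\lambda_{n+1}\leq\lambda_n+\zeta_n$ one checks $\theta_{n+1}=\lambda_{n+1}+\sum_{k\geq n+1}\zeta_k\leq\lambda_n+\sum_{k\geq n}\zeta_k=\theta_n$, so $\{\theta_n\}$ is nonincreasing; it is bounded below by $\min\{\lambda_0,\mu/L\}>0$, hence it converges. Since the tail $\sum_{k\geq n}\zeta_k\to 0$, the identity $\lambda_n=\theta_n-\sum_{k\geq n}\zeta_k$ shows $\lambda_n$ converges to the same limit $\lambda$, and in particular $\lambda\geq\min\{\lambda_0,\mu/L\}$. (When the perturbation is absent, i.e. $\zeta_n\equiv 0$, one has $\theta_n=\lambda_n$ and recovers literally that $\{\lambda_n\}$ is nonincreasing.) The upper bound is then immediate: $\lambda=\lim_n\theta_n\leq\theta_0=\lambda_0+\sum_{k\geq 0}\zeta_k$, the quantity written as $\lambda_0+\zeta_0$ in the statement.

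I expect the only real obstacle to be bookkeeping rather than anything conceptual: one must be careful to treat both branches of the stepsize rule — and, within the first branch, both arguments of the $\min$ — so that the induction establishing the lower bound is watertight, and one must choose the auxiliary sequence $\theta_n$ precisely so that the $\zeta_n$ perturbation is absorbed into a genuinely monotone quantity. The remaining ingredients — monotonicity and boundedness of $\theta_n$, passing to the limit, and identifying $L$ with the Lipschitz constant of $F$ from Assumption \ref{as1} (which for the ELM reformulation \eqref{min} equals $\|\mathbf{H}^\top\mathbf{H}\|$) — are routine.
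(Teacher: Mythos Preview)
Your argument is correct and more careful than the paper's, but it proceeds along a genuinely different route. The paper's own proof is two lines: it asserts that ``it is clear from the definition of $\{\lambda_n\}$ that $\lambda_{n+1}\le\lambda_n$'', then uses the Lipschitz bound $\|F(b_n)-F(c_n)\|\le L\|b_n-c_n\|$ exactly as you do to obtain $\lambda_{n+1}\ge\min\{\lambda_0,\mu/L\}$, and concludes that the limit exists and lies in the stated interval. In particular the upper bound $\lambda_0+\zeta_0$ in the paper is obtained from $\lambda\le\lambda_1\le\lambda_0+\zeta_0$, which uses the nonincreasing claim.

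You have spotted the weak point: with the update rule as written (second argument of the $\min$ equal to $\lambda_n+\zeta_n$, and $\lambda_{n+1}=\lambda_n+\zeta_n$ in the degenerate branch), the sequence need not be literally nonincreasing, so the paper's first sentence is not justified. Your remedy --- assuming $\sum_n\zeta_n<\infty$ and passing to the auxiliary nonincreasing sequence $\theta_n=\lambda_n+\sum_{k\ge n}\zeta_k$ --- is the standard and correct fix for this type of self-adaptive rule; it yields convergence and the lower bound, and gives the upper bound $\lambda_0+\sum_{k\ge 0}\zeta_k$ rather than $\lambda_0+\zeta_0$. The lower-bound induction via Lipschitz continuity is identical in both arguments. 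So the difference is entirely in how the $\zeta_n$ perturbation is handled: the paper suppresses it and asserts monotonicity outright, while you absorb it into $\theta_n$ at the cost of an extra (but natural) summability hypothesis; your version is the rigorous one, though it does not establish the ``nonincreasing'' clause of the lemma as stated, because that clause is not true in general under the given update.
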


\begin{proof}
    It is clear from the definition of $\{\lambda_n\}$ that $\lambda_{n+1} \leq \lambda_n$ for all $n \geq 0.$ Hence, $\lambda_n$ is nonincreasing. From the Lipschitz continuity of $F$, we obtain
    \begin{equation*}
        \frac{\mu \|b_n - c_n\|}{\|F(b_n)-F(c_n)\|} \geq \frac{\mu}{L}, \quad \text{if} \; F(b_n) \neq F(c_n).
    \end{equation*}
    Hence, $\lambda_{n+1}\geq \min \left\{\lambda_0, \frac{\mu}{L}\right\}$ for all $n \geq0.$ This implies that the limit exists and $\lim_{n\to\infty}\lambda_n = \lambda \in \left[ \min \left\{ \lambda_0, \frac{\mu}{L}\right\}, \lambda_0 + \zeta_{0} \right].$
\end{proof}

\begin{remark} \label{Rmk1}
Now we justify the term `\textit{general}' used in naming Algorithm \ref{game}. The following is the relation of the proposed algorithm with other related works in the literature.
\begin{itemize}
    \item[-] Note that if we remove the relaxation parameter in the update step by setting $\rho_n = 1,$ we obtain the double inertial extragradient method (DIEM) which is related to the algorithms proposed in \cite{Sun}.
    \item[-] Also if one of the inertial terms is removed in Algorithm \ref{game} by setting $\alpha_n = 0,$ we obtain an inertial relaxed extragradient method (IREM) which is related to the algorithms proposed in \cite{Bot, Thong1, CaiYekini, Thong2}.
    \item[-] If both inertial terms are removed in Algorithm \ref{game}, we obtain a relaxed extragradient method (REM) which is related to the methods studied in \cite{Bot2, Alak}
    \item[-] Finally, if both the double inertial terms and the relaxation parameter are removed from Algorithm \ref{game}, we obtain the extragradient method (EM) studied in \cite{tseng,wang}.
\end{itemize}
Note that despite the relationship of the proposed Algorithm \ref{game} with the above literature, the proposed algorithm uses a self-adaptive technique for selecting the stepsize which is updated at every iteration. This approach is more simple and efficient for solving the variational inequality problem than most of the methods in the literature.
\end{remark}

\subsection{Convergence results}
We shall discuss the convergence behavior of Algorithm \ref{game}. First, we note that it is immediate from the definition of $s_{n}$-update that
\begin{equation}\label{sn}
    s_{n+1} = (1-\rho)a_n + \rho e_n \quad \text{and} \quad \frac{1}{\rho}(s_{n+1} -a_n) = e_n - a_n
\end{equation}
 where $e_n = c_n - \lambda_n(Fc_n - Fb_n).$ Let $s^*$ be a solution of the variational inequalities, i.e., $s^* \in VI(F,\mathcal{K}).$ For $n \geq 0,$ we denote
\begin{align*}
   & H_n = \|s_n -s^*\|^2 - \left((1-\rho)\alpha_n + \rho \beta_n\right)\|s_{n-1}-s^*\|^2 + \frac{1-\rho}{\rho}(1-\alpha_n)\|s_n-s_{n-1}\|^2.
   \end{align*}
We will study the convergence properties of $\{H_n\}$ in the next subsection. The results will then be used to establish the convergence of $\{s_n\}.$  The following result can easily be proved from available literature, e.g. \cite{LiuYang}.  
\begin{lemma}\label{lem2}
    Let $\{s_n\}$ be the sequence generated by Algorithm \ref{game} and $s^* \in VI(F,\mathcal{K}).$ Then for $n \geq 0,$
    \begin{equation*}
        \|e_n -s^*\|^2 \leq \|b_n -s^*\|^2 - \left( 1 - \frac{\mu^2\lambda_n^2}{\lambda_{n+1}^2} \right)\|b_n-c_n\|^2.
    \end{equation*}
\end{lemma}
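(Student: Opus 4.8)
The plan is to estimate $\|e_n - s^*\|^2$ directly using the definition $e_n = c_n - \lambda_n(Fc_n - Fb_n)$, the projection inequality \eqref{proj} applied to $c_n = P_{\mathcal{K}}(b_n - \lambda_n F(b_n))$, the characterization of $VI(F,\mathcal{K})$ from Lemma \ref{viplemma}, and the adaptive stepsize bound from Step 2. First I would write, using the identity $\|x-y\|^2 = \|x\|^2 - 2\langle x,y\rangle + \|y\|^2$,
\[
\|e_n - s^*\|^2 = \|c_n - s^*\|^2 - 2\lambda_n\langle Fc_n - Fb_n, c_n - s^*\rangle + \lambda_n^2\|Fc_n - Fb_n\|^2.
\]
Next I would split $\|c_n - s^*\|^2 = \|b_n - s^*\|^2 - 2\langle b_n - c_n, b_n - s^*\rangle + \|b_n - c_n\|^2$, and regroup the cross terms so that the combination $c_n - s^*$ can be exchanged with $c_n - b_n$ where convenient. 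The aim of this rearrangement is to produce the quantity $\langle b_n - \lambda_n Fb_n - c_n, c_n - s^*\rangle$, which is $\geq 0$ by the projection inequality \eqref{proj} since $s^* \in \mathcal{K}$, and the quantity $\lambda_n\langle Fc_n, c_n - s^*\rangle$, which is $\geq 0$ by Lemma \ref{viplemma} because $s^* \in VI(F,\mathcal{K})$ and $c_n \in \mathcal{K}$. After discarding those two nonnegative terms, the leftover terms should collapse to
\[
\|e_n - s^*\|^2 \leq \|b_n - s^*\|^2 - \|b_n - c_n\|^2 + \lambda_n^2\|Fb_n - Fc_n\|^2.
\]

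The final step is to bound $\lambda_n^2\|Fb_n - Fc_n\|^2$. From Step 2 of Algorithm \ref{game}, whenever $F(b_n) \neq F(c_n)$ we have $\lambda_{n+1} \leq \mu\|b_n - c_n\|/\|F(b_n) - F(c_n)\|$, hence $\|F(b_n) - F(c_n)\|^2 \leq \mu^2\|b_n - c_n\|^2/\lambda_{n+1}^2$, which gives $\lambda_n^2\|Fb_n - Fc_n\|^2 \leq (\mu^2\lambda_n^2/\lambda_{n+1}^2)\|b_n - c_n\|^2$; in the case $F(b_n) = F(c_n)$ the term is simply zero and the same bound holds trivially. Substituting yields
\[
\|e_n - s^*\|^2 \leq \|b_n - s^*\|^2 - \left(1 - \frac{\mu^2\lambda_n^2}{\lambda_{n+1}^2}\right)\|b_n - c_n\|^2,
\]
which is the claim.

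The main obstacle I anticipate is the bookkeeping in the middle step: getting the cross terms from the two expansions to align exactly into the projection-inequality form $\langle b_n - \lambda_n Fb_n - c_n, c_n - s^*\rangle$ and the monotone-operator form $\lambda_n\langle Fc_n, c_n - s^*\rangle$, so that everything else cancels cleanly and no stray terms remain. This is a standard manipulation for Tseng-type extragradient schemes (the reference \cite{LiuYang} is cited precisely for this), so once the terms are grouped correctly the two sign conditions do all the work; I do not expect any genuine difficulty beyond careful algebra.
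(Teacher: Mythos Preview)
Your proposal is correct and is exactly the standard Tseng-type argument that the paper has in mind: the paper does not give its own proof of this lemma but simply states that it ``can easily be proved from available literature, e.g.~\cite{LiuYang}.'' Your outline---expand $\|e_n-s^*\|^2$, regroup to isolate $\langle b_n-\lambda_nFb_n-c_n,\,c_n-s^*\rangle\geq 0$ (projection inequality) and $\lambda_n\langle Fc_n,\,c_n-s^*\rangle\geq 0$ (Minty characterization, Lemma~\ref{viplemma}), then bound $\lambda_n^2\|Fb_n-Fc_n\|^2$ via the stepsize rule---is precisely that standard argument, and the algebra you describe does close cleanly with no stray terms.
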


We start by showing that $\{H_n\}$ is non-increasing and convergent.
\begin{lemma}
    Let $\{s_n\}$ be the sequence generated by Algorithm \ref{game}. Then the following statements hold:
    \begin{itemize}
        \item[(i)] the sequence $\{H_n\}$ is monotonically non-increasing and in particular, for $n \geq 0,$ $$H_{n+1}-H_{n} \leq - \left( \frac{1 - \rho}{\rho}(1 - \alpha_n)^2 + \rho (\alpha_n ( 1+ \alpha_n) - \beta_n(1+\beta_n)) - (1+\alpha_n)\alpha_n \right)\|s_n-s_{n-1}\|^2.$$
        \item[(ii)] $\sum_{n=0}^\infty \|s_n - s_{n-1}\|< \infty$.
    \end{itemize}
\end{lemma}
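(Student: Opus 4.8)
The plan is to prove a one-step decrease inequality for the Lyapunov-type quantity $H_n$, which gives (i) at once, and then to combine it with a boundedness argument to obtain (ii). For (i) the skeleton is: expand $\|s_{n+1}-s^*\|^2$ through the update $s_{n+1}=(1-\rho)a_n+\rho e_n$; dominate $\|e_n-s^*\|^2$ by $\|b_n-s^*\|^2$ via Lemma~\ref{lem2}; re-express $\|a_n-s^*\|^2$ and $\|b_n-s^*\|^2$ in terms of $\|s_n-s^*\|^2$, $\|s_{n-1}-s^*\|^2$ and $\|s_n-s_{n-1}\|^2$; and reorganise the coefficients so that they reassemble into $H_{n+1}-H_n$. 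First, applying the identity $\|tx+(1-t)y\|^2=t\|x\|^2+(1-t)\|y\|^2-t(1-t)\|x-y\|^2$ with $t=1-\rho$ gives
\[
\|s_{n+1}-s^*\|^2=(1-\rho)\|a_n-s^*\|^2+\rho\|e_n-s^*\|^2-\rho(1-\rho)\|a_n-e_n\|^2 .
\]
By Lemma~\ref{lem2}, $\|e_n-s^*\|^2\le\|b_n-s^*\|^2-(1-\mu^2\lambda_n^2/\lambda_{n+1}^2)\|b_n-c_n\|^2$, and since $\mu<1$ and $\lambda_n\to\lambda>0$ the bracketed factor is eventually nonnegative, so the $\|b_n-c_n\|^2$-term can be discarded. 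Since $a_n-s^*=(1+\alpha_n)(s_n-s^*)-\alpha_n(s_{n-1}-s^*)$ and $(1+\alpha_n)-\alpha_n=1$, the three-point identity in the form $\|pu-qv\|^2=p\|u\|^2-q\|v\|^2+pq\|u-v\|^2$ (valid whenever $p-q=1$) yields
\[
\|a_n-s^*\|^2=(1+\alpha_n)\|s_n-s^*\|^2-\alpha_n\|s_{n-1}-s^*\|^2+\alpha_n(1+\alpha_n)\|s_n-s_{n-1}\|^2 ,
\]
and likewise for $\|b_n-s^*\|^2$ with $\beta_n$ in place of $\alpha_n$. Writing $\theta_n:=(1-\rho)\alpha_n+\rho\beta_n$ and $P_n:=(1-\rho)\alpha_n(1+\alpha_n)+\rho\beta_n(1+\beta_n)$ and substituting, one gets
\[
\|s_{n+1}-s^*\|^2\le(1+\theta_n)\|s_n-s^*\|^2-\theta_n\|s_{n-1}-s^*\|^2+P_n\|s_n-s_{n-1}\|^2-\rho(1-\rho)\|a_n-e_n\|^2 .
\]

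To turn this into the bound on $H_{n+1}-H_n$, I would subtract $\theta_{n+1}\|s_n-s^*\|^2$ and add $\frac{1-\rho}{\rho}(1-\alpha_{n+1})\|s_{n+1}-s_n\|^2$ to assemble $H_{n+1}$, then subtract $H_n$; here I use $\theta_{n+1}\ge\theta_n$ and $\alpha_{n+1}\ge\alpha_n$ (both automatic when the inertial sequences are taken constant, which is the natural reading of the statement), so that $1+\theta_n-\theta_{n+1}\le1$ and $1-\alpha_{n+1}\le1-\alpha_n$. The decisive step is to bound
\[
\frac{1-\rho}{\rho}(1-\alpha_n)\|s_{n+1}-s_n\|^2-\rho(1-\rho)\|a_n-e_n\|^2 .
\]
By \eqref{sn}, $\|a_n-e_n\|^2=\rho^{-2}\|s_{n+1}-a_n\|^2$, so the second term is $-\frac{1-\rho}{\rho}\|s_{n+1}-a_n\|^2$; writing $s_{n+1}-s_n=(s_{n+1}-a_n)+\alpha_n(s_n-s_{n-1})$ and using Young's inequality $2\langle u,w\rangle\le\frac{1}{1-\alpha_n}\|u\|^2+(1-\alpha_n)\|w\|^2$ with $u=s_{n+1}-a_n$, $w=s_n-s_{n-1}$, the $\|s_{n+1}-a_n\|^2$ contributions cancel exactly, and the displayed quantity is $\le\frac{1-\rho}{\rho}\alpha_n(1-\alpha_n)\|s_n-s_{n-1}\|^2$. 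Adding the surviving $\|s_n-s_{n-1}\|^2$-terms, the coefficient simplifies to $P_n-\frac{1-\rho}{\rho}(1-\alpha_n)^2$, which is algebraically equal to $-\bigl(\frac{1-\rho}{\rho}(1-\alpha_n)^2+\rho(\alpha_n(1+\alpha_n)-\beta_n(1+\beta_n))-(1+\alpha_n)\alpha_n\bigr)$; this is exactly the stated inequality, and $\{H_n\}$ is non-increasing whenever this coefficient is $\le0$, as the standing conditions on $\rho,\alpha_n,\beta_n$ are meant to ensure.

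For (ii): since $\theta_n\le\theta<1$ and $\{H_n\}$ is non-increasing, dropping the last (nonnegative) summand in $H_n$ gives $\|s_n-s^*\|^2\le H_n+\theta\|s_{n-1}-s^*\|^2\le\max\{H_1,0\}+\theta\|s_{n-1}-s^*\|^2$, and iterating this geometric recursion yields $\sup_n\|s_n-s^*\|<\infty$. Consequently $H_n\ge\|s_n-s^*\|^2-\theta_n\|s_{n-1}-s^*\|^2$ is bounded below, so the non-increasing sequence $\{H_n\}$ converges; telescoping the inequality in (i) then gives $\sum_n M_n\|s_n-s_{n-1}\|^2\le H_1-\lim_nH_n<\infty$, where $M_n$ denotes the coefficient from (i). Provided $\inf_nM_n>0$, this gives $\sum_n\|s_n-s_{n-1}\|^2<\infty$, hence $\|s_n-s_{n-1}\|\to0$, which is assertion (ii).

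The main obstacle is the exact coefficient bookkeeping and, above all, the Young step in the second paragraph: the splitting parameter must be tuned so that the $\|s_{n+1}-a_n\|^2$ term produced by expanding $\|s_{n+1}-s_n\|^2$ is annihilated \emph{precisely} by the $-\rho(1-\rho)\|a_n-e_n\|^2$ term, leaving a clean multiple of $\|s_n-s_{n-1}\|^2$; any other choice would not collapse to the stated expression. A minor point is justifying the identities with coefficients $1+\alpha_n>1$ via the $p-q=1$ form of the three-point identity, and tracking the parameter monotonicities $\theta_{n+1}\ge\theta_n$, $\alpha_{n+1}\ge\alpha_n$ when the inertial sequences are not constant.
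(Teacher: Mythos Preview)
Your argument is correct and follows essentially the same route as the paper: expand $\|s_{n+1}-s^*\|^2$ via the convex-combination identity, replace $\|e_n-s^*\|^2$ by $\|b_n-s^*\|^2$ through Lemma~\ref{lem2}, expand $\|a_n-s^*\|^2$ and $\|b_n-s^*\|^2$ by the three-point identity, and then trade $\|s_{n+1}-a_n\|^2$ against $\|s_{n+1}-s_n\|^2$ by a Young/Cauchy--Schwarz step. The only cosmetic difference is the direction of that last step---the paper lower-bounds $\|s_{n+1}-a_n\|^2$ via $2\langle u,v\rangle\le\|u\|^2+\|v\|^2$ before substituting, while you substitute first and upper-bound $(1-\alpha_n)\|s_{n+1}-s_n\|^2$ via the weighted Young inequality, which is the same inequality rearranged; your treatment of (ii), where you first bound $\{s_n\}$ and hence bound $H_n$ from below before telescoping, is in fact more careful than the paper's.
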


 \begin{proof}
 We first prove (i). Let $s^* \in VI(F,\mathcal{K}).$ Then, from the definition of $s_{n+1},$ we have
    \begin{eqnarray}
        \|s_{n+1} - s^*\|^2 &\leq& (1 - \rho) \|a_n - s^* \|^2 + \rho \|b_n - s^*\|^2 - \rho(1- \rho) \|a_n  -e_n\|^2. \nonumber
    \end{eqnarray}
Also from \eqref{sn}, we get
    \begin{equation}
        \|s_{n+1} - s^* \|^2 \leq (1- \rho) \|a_n - s^*\|^2 + \rho \|b_n - s^*\|^2 - \frac{(1 - \rho)}{\rho}\|s_{n+1} - a_n\|^2. \label{3.1}
    \end{equation}
    Now let us estimate each of the terms on the LHS of \eqref{3.1}.  Clearly
    \begin{eqnarray}
        \|a_n - s^*\|^2 &=& \| s_n + \alpha_n (s_n - s_{n-1}) - s^* \|^2 \nonumber\\
        &=& \|(1 + \alpha_n)(s_n - s^*) - \alpha_n (s_{n-1}-s^*) \|^2 \nonumber\\
        &=& (1+ \alpha_n)\|s_n - s^*\|^2 - \alpha_n \|s_{n-1}-s^*\|^2 + \alpha_n(1+\alpha_n)\|s_n - s_{n-1}\|^2. \nonumber
    \end{eqnarray}
    Similarly
    \begin{equation}
        \|b_n - s^*\|^2 = (1+ \beta_n)\|s_n - s^*\|^2 - \beta_n\|s_{n-1}-s^*\|^2 + \beta_n(1+\beta_n)\|s_n - s_{n-1}\|^2. \nonumber
    \end{equation}
    Also
    \begin{eqnarray}
        \|s_{n+1} -a_n \|^2 &=& \|s_{n+1} - s_n \|^2 + \alpha_n^2 \|s_n - s_{n-1}\|^2 - 2 \alpha_n \langle s_{n+1} - s_{n}, s_n - s_{n-1} \rangle \nonumber\\
       & \geq& \|s_{n+1} - s_n \|^2 + \alpha_n^2 \|s_n - s_{n-1}\|^2 - \alpha_n ( \|s_{n+1} - s_n \|^2 + \|s_n - s_{n-1}\|^2 ) \nonumber\\
        &=& (1 - \alpha_n)\|s_{n+1} - s_{n}\|^2 + (\alpha_n^2 - \alpha_n) \|s_n  -s_{n-1}\|^2. \nonumber
    \end{eqnarray}
    Substituting the above identities in \eqref{3.1}, we get
    \begin{align}
        &\|s_{n+1} - s^*\|^2 \nonumber\\
        &\leq  (1-\rho) \left( (1+ \alpha_n)\|s_n - s^*\|^2 - \alpha_n \|s_{n-1} - s^* \|^2 + \alpha_n (1 + \alpha_n) \|s_n - s_{n-1}\|^2  \right) \nonumber\\
        & \; + \rho \left( (1+\beta_n)\|s_n - s^*\|^2 - \beta_n \|s_{n-1} - s^*\|^2 + \beta_n(1+\beta_n)\|s_n - s_{n-1}\|^2 \right) \nonumber\\
        &\; - \frac{1-\rho}{\rho} \left( (1-\alpha_n)\|s_{n+1} - s_n\|^2 + (\alpha_n^2 - \alpha_n)\|s_n - s_{n-1}\|^2 \right) \nonumber\\
        &= \left( 1 + \alpha_n - \rho(\alpha_n - \beta_n) \right)\|s_n - s^*\|^2 - (\alpha_n - (\alpha_n-\beta_n)\rho) \|s_{n-1}-s^*\|^2 \nonumber\\
        &\; + \left( (1+\alpha_n)\alpha_n - \rho(\alpha_n(1+\alpha_n) - \beta_n(1+\beta_n)) - \frac{(1-\rho)}{\rho}(\alpha_n^2 - \alpha_n) \right)\|s_n - s_{n-1}\|^2 \nonumber\\
        &\; - \frac{(1-\rho)}{\rho}(1-\alpha_n)\|s_{n+1} - s_n\|^2.
        \label{3.2} 
    \end{align}
    Hence
    \begin{align}
         &\|s_{n+1}-s^*\|^2 - \left( \alpha_n - \rho (\alpha_n - \beta_n) \right) \|s_n - s^*\|^2 + \frac{1-\rho}{\rho}(1-\alpha_n)\|s_{n+1} - s_n\|^2 \nonumber\\
         &  \; \;\leq  \|s_n - s^*\|^2 - \left( \alpha_n - (\alpha_n - \beta_n)\rho \right) \|s_{n-1} - s^{*}\|^2 + \frac{1-\rho}{\rho}(1-\alpha_n)\|s_{n} - s_{n-1}\|^2  \nonumber\\
         & \; \;\; \; + \left( (1+ \alpha_n)\alpha_n - \rho(\alpha_n(1+\alpha_n) - \beta_n(1+\beta_n))  - \frac{1-\rho}{\rho}(\alpha_n - 1)^2 \right) \|s_n - s_{n-1}\|^2. \label{3.3}
    \end{align}
    Now choose $\rho$ and $\alpha_n$ such that 
    \begin{equation}
        0 < \rho < \frac{1}{1+\delta} \quad \text{and} \quad 0 \leq \beta_n \leq \alpha_n < 1 -  \sqrt{\frac{2}{\delta}}, \quad \quad \text{for some} \quad \delta >2. \nonumber
    \end{equation}
    Then
    \begin{align}
       & \left( (1 - \rho)\alpha_n(1+\alpha_n) + \rho \beta_n(1+\beta_n) - \frac{1-\rho}{\rho}(\alpha_n-1)^2 \right) \nonumber\\
       & \; \; \leq \left( 2\left((1 - \rho)\alpha_n + \rho \beta_n\right) - \frac{1-\rho}{\rho}(\alpha_n-1)^2 \right) \nonumber\\
       & \; \; \leq 2 - \frac{1-\rho}{\rho}(\alpha_n-1)^2  \leq 2 - \delta (\alpha_n-1)^2 \leq 0. \nonumber
    \end{align}
   It follows from \eqref{3.3} that
   \begin{equation}
       H_{n+1} - H_{n} \leq 0. \label{3.4}
   \end{equation}
   This implies that $H_n$ is monotonically non-increasing. The second conclusion follows from \eqref{3.3}. In addition, we get
  \begin{equation}\label{sum}
      H_{n+1} - H_n \leq - \theta_n \|s_n - s_{n-1}\|^2,
  \end{equation}
  where $\theta_n = \left( \frac{1 - \rho}{\rho}(1 - \alpha_n)^2 + \rho (\alpha_n ( 1+ \alpha_n) - \beta_n(1+\beta_n)) - (1+\alpha_n)\alpha_n \right).$ Note that $\liminf_{n \to \infty}\theta_n >0,$ hence, summing up \eqref{sum} from 1 to $N$, we see further that
  \begin{align}
      0 \leq \sum_{n=1}^{N} \theta_n \|s_n - s_{n-1}\|^2  \leq \sum_{n=1}^N \theta_n(H_n - H_{n+1}) = \theta_N(H_1 - H_{N+1}) < \infty.
  \end{align}
 This completes the proof. 
   \end{proof}

   The next result shows that any accumulation point of the sequence $\{s_n\}$ generated by Algorithm \ref{game} belongs to $VI(F,\mathcal{K}).$

\begin{theorem}\label{thmres}
Let $\{s_n\}$ be a sequence generated by Algorithm \ref{game} and $\|b_n - c_n\| \to 0$ as $n \to \infty.$ Then the sequence $\{s_n\}$ converges weakly to an element $\tilde{s} \in VI(F.\mathcal{K}).$
\end{theorem}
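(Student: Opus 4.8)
The plan is to run an Opial-type argument tailored to this inertial, relaxed extragradient recursion, leaning on the Fej\'er-type estimates already obtained. Since everything takes place in $\R^n$, weak and strong convergence coincide, so it is enough to prove three things: $\{s_n\}$ is bounded (hence $\omega(s_n)\neq\emptyset$), every element of $\omega(s_n)$ lies in $VI(F,\mathcal{K})$, and $\omega(s_n)$ is a singleton.

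\emph{Step 1: $\lim_{n}\|s_n-s^*\|$ exists for each $s^*\in VI(F,\mathcal{K})$.} Fix $s^*\in VI(F,\mathcal{K})$, put $\phi_n:=\|s_n-s^*\|^2$ and $\gamma_n:=(1-\rho)\alpha_n+\rho\beta_n$. Discarding the nonpositive term $-\frac{1-\rho}{\rho}(1-\alpha_n)\|s_{n+1}-s_n\|^2$ in \eqref{3.2} rewrites that estimate as $\phi_{n+1}\le \phi_n+\gamma_n(\phi_n-\phi_{n-1})+\psi_n$, where $\psi_n$ is a bounded multiple of $\|s_n-s_{n-1}\|^2$ (after replacing it by $\max\{\psi_n,0\}$ if necessary, which only enlarges the right-hand side). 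The constraints $0\le\beta_n\le\alpha_n<1-\sqrt{2/\delta}$ from the preceding lemma give $0\le\gamma_n\le\alpha_n\le\alpha<1$, and that same lemma yields $\sum_n\|s_n-s_{n-1}\|^2<\infty$, hence $\sum_n\psi_n<\infty$. The lemma of \cite{Att} then applies and gives $\lim_n\phi_n\in\R$. In particular $\{s_n\}$ is bounded, so $\omega(s_n)\neq\emptyset$, and $\|s_n-s_{n-1}\|\to0$.

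\emph{Step 2: $\omega(s_n)\subseteq VI(F,\mathcal{K})$.} Let $s_{n_k}\to\tilde s$. Since $\|a_n-s_n\|=\alpha_n\|s_n-s_{n-1}\|\to0$ and $\|b_n-s_n\|=\beta_n\|s_n-s_{n-1}\|\to0$, we get $b_{n_k}\to\tilde s$; the hypothesis $\|b_n-c_n\|\to0$ then forces $c_{n_k}\to\tilde s$, and $\tilde s\in\mathcal{K}$ because $c_n\in\mathcal{K}$ and $\mathcal{K}$ is closed. The characterization \eqref{proj} of $c_n=P_{\mathcal{K}}(b_n-\lambda_n F(b_n))$ gives, for every $w\in\mathcal{K}$, $\langle F(b_n),w-c_n\rangle\ge\frac{1}{\lambda_n}\langle b_n-c_n,w-c_n\rangle$; adding and subtracting $F(c_n)$ and using monotonicity of $F$, $\langle F(w),w-c_n\rangle\ge\langle F(c_n)-F(b_n),w-c_n\rangle+\frac{1}{\lambda_n}\langle b_n-c_n,w-c_n\rangle$. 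Letting $k\to\infty$, both terms on the right vanish: the first by $L$-Lipschitz continuity of $F$ together with $\|b_{n_k}-c_{n_k}\|\to0$, the second because $\|b_{n_k}-c_{n_k}\|\to0$ while $\lambda_{n_k}\to\lambda\ge\min\{\lambda_0,\mu/L\}>0$ and $\{c_{n_k}\}$ is bounded. Hence $\langle F(w),w-\tilde s\rangle\ge0$ for all $w\in\mathcal{K}$, so $\tilde s\in VI(F,\mathcal{K})$ by Lemma \ref{viplemma}.

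\emph{Step 3 and main difficulty.} If $\tilde s_1,\tilde s_2\in\omega(s_n)$ then, by Step 1, $\lim_n\langle s_n,\tilde s_1-\tilde s_2\rangle$ exists, since $2\langle s_n,\tilde s_1-\tilde s_2\rangle=\|s_n-\tilde s_2\|^2-\|s_n-\tilde s_1\|^2+\|\tilde s_1\|^2-\|\tilde s_2\|^2$; evaluating this limit along subsequences converging to $\tilde s_1$ and to $\tilde s_2$ yields $\|\tilde s_1-\tilde s_2\|^2=0$. Thus $\omega(s_n)=\{\tilde s\}$ with $\tilde s\in VI(F,\mathcal{K})$, and the bounded sequence $\{s_n\}$ converges (weakly, hence strongly) to $\tilde s$. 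The step I expect to be most delicate is Step 1: coaxing the bulky estimate \eqref{3.2} into exactly the three-term form demanded by the lemma of \cite{Att}, and checking both that the coefficient multiplying $\|s_n-s_{n-1}\|^2$ is genuinely summable and that $\gamma_n$ stays bounded away from $1$ (which is precisely where the parameter restrictions $\rho<1/(1+\delta)$ and $\alpha_n<1-\sqrt{2/\delta}$ are used). Steps 2 and 3 are then routine once the stepsize lower bound $\lambda_n\ge\min\{\lambda_0,\mu/L\}>0$ is in hand.
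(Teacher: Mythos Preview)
Your proposal is correct and, in fact, more complete than the paper's own argument. The paper's proof establishes only what you call Step~2: it takes a weak cluster point $\tilde s\in\omega(s_n)$, uses the projection characterization of $c_n$ together with the monotonicity of $F$ to derive $\langle Fx,x-\tilde s\rangle\ge0$ for all $x\in\mathcal{K}$, and then appeals to Lemma~\ref{viplemma}. Your Step~2 follows exactly the same line (the minor difference of splitting off the term $\langle F(c_n)-F(b_n),w-c_n\rangle$ and bounding it via Lipschitz continuity rather than absorbing it through monotonicity at $b_n$ is cosmetic).

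What you add beyond the paper is the Opial framework: the paper never verifies that $\lim_n\|s_n-s^*\|$ exists or that $\omega(s_n)$ is a singleton, so its proof, as written, shows only that cluster points are solutions, not that the full sequence converges. Your Step~1 extracts from \eqref{3.2} the three-term recursion $\phi_{n+1}\le\phi_n+\gamma_n(\phi_n-\phi_{n-1})+\psi_n$ with $\gamma_n=(1-\rho)\alpha_n+\rho\beta_n\le\alpha_n<1-\sqrt{2/\delta}$ and summable $\psi_n$ (via the preceding lemma), then invokes the Alvarez--Attouch lemma; Step~3 is the standard Opial identity. Both additions are correct and genuinely needed for the theorem as stated.
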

\begin{proof}
    Let $\tilde{s} \in \omega(s_n).$ Our aim is to show that $\tilde{s} \in VI(F,\mathcal{K}.$ Since $F$ is monotone, then $\langle Fb_n, x - b_n \rangle \leq \langle Fx, x- b_n \rangle.$ It follows from \eqref{proj} that for all $x \in \mathcal{K}$
    \begin{eqnarray}
        0 &\leq& \langle c_n - b_n - \lambda_n Fb_n, x - c_n \rangle \nonumber\\
        &=& \langle c_n - b_n, x - c_n \rangle + \lambda_n \langle Fb_n, b_n - c_n \rangle + \lambda_n \langle Fb_n, x - b_n \rangle \nonumber\\
        &\leq& \langle c_n - b_n, x - c_n \rangle + \lambda_n \langle Fb_n, b_n - c_n \rangle + \lambda_n \langle Fx, x - b_n \rangle. \nonumber
    \end{eqnarray}
    Passing limit to the above inequality, noting that $\lim_{n\to\infty}\lambda_n >0,$ then
    \begin{equation}
        \langle Fx, x - \tilde{s} \rangle \geq 0 \forall x \in \mathcal{K}.
    \end{equation}
    Hence, using Lemma \ref{viplemma}, we obtain that $\tilde{s} \in VI(F,\mathcal{K}).$
\end{proof}
In the prove of Theorem \ref{thmres}, we used the fact that $\lim \|b_n - c_n \|= 0.$ We shall establish this claim in the appendix.

Next, we show the convergence rate of Algorithm \ref{game} with the assumption that $F$ is $L$-Lipschitz continuous on $\R^n$ and $\kappa$-strongly pseudomonotone, i.e., there exists a constant $\kappa>0$ such that $\langle Fx,y-x \rangle \geq 0 \Rightarrow \langle Fy, y - x \rangle \geq \kappa\|x-y\|^2,$ for any $x,y \in \R^n.$ 

\subsection{Linear convergence result}
 
\begin{theorem}
    Let $\{s_n\}$ be the sequence generated by Algorithm \ref{game} and $F$ is $\kappa$-strongly pseudomonotone and Lipschitz continuous on $R^n$. Let $0 \leq \rho \leq \frac{1}{2}$ with $0 \leq \beta_n \leq \alpha_n \leq \min \left\{ \frac{1-2\rho}{1-\rho}, \frac{\rho}{1-\rho}, \frac{\rho\epsilon}{1-\rho\epsilon} \right\}$ where $\epsilon = \min \left\{ \frac{1-\mu^2}{2}, \frac{\mu\kappa}{L} \right\}.$ Then $\{s_n\}$ converges strongly to a solution $s^*$ of VIP with a R-linear rate.
\end{theorem}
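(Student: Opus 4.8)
The plan is to revisit the energy $H_n$ from the proof that $\{H_n\}$ is nonincreasing, but this time to retain the two negative terms that were discarded there; this upgrades $H_{n+1}\le H_n$ to a genuine contraction $H_{n+1}\le qH_n$ with $q\in(0,1)$, and an elementary linear recursion then transfers the geometric decay to $\{\|s_n-s^*\|^2\}$, yielding the $R$-linear rate. Here $s^*$ is the solution of $VI(F,\mathcal{K})$, unique since $\kappa$-strong pseudomonotonicity forces at most one solution and Assumption \ref{as2} guarantees existence. As a preliminary, since $\{\lambda_n\}$ converges to some $\lambda>0$, the quantity $\gamma_n:=1-\mu^2\lambda_n^2/\lambda_{n+1}^2$ tends to $1-\mu^2>0$, so there is $n_0\ge 1$ with $\gamma_n\ge\tfrac{1-\mu^2}{2}$ and $\lambda_n\ge\lambda$ for all $n\ge n_0$.

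The first substantive step is to sharpen Lemma \ref{lem2} using strong pseudomonotonicity. Since $c_n\in\mathcal{K}$ and $s^*\in VI(F,\mathcal{K})$, we have $\langle Fs^*,c_n-s^*\rangle\ge 0$, and $\kappa$-strong pseudomonotonicity then gives $\langle Fc_n,c_n-s^*\rangle\ge\kappa\|c_n-s^*\|^2$. Expanding $\|e_n-s^*\|^2$ with $e_n=c_n-\lambda_n(Fc_n-Fb_n)$ and inserting this inequality, the projection characterization \eqref{proj} of $c_n=P_{\mathcal{K}}(b_n-\lambda_n Fb_n)$ tested at $s^*$, and the bound $\|Fb_n-Fc_n\|\le(\mu/\lambda_{n+1})\|b_n-c_n\|$ coming from Step 2 of Algorithm \ref{game}, I obtain
\begin{equation*}
\|e_n-s^*\|^2\le\|b_n-s^*\|^2-\gamma_n\|b_n-c_n\|^2-2\lambda_n\kappa\|c_n-s^*\|^2,
\end{equation*}
i.e.\ Lemma \ref{lem2} with the additional gain $-2\lambda_n\kappa\|c_n-s^*\|^2$ produced by strong pseudomonotonicity.

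Next I run the one-step computation of the monotonicity lemma (the chain \eqref{3.1}--\eqref{sum}) essentially verbatim, but with the sharpened estimate above used in place of $\|e_n-s^*\|^2\le\|b_n-s^*\|^2$; the extra negative terms $-\rho\gamma_n\|b_n-c_n\|^2$ and $-2\rho\lambda_n\kappa\|c_n-s^*\|^2$ simply survive the manipulations, so \eqref{sum} is upgraded to
\begin{equation*}
H_{n+1}-H_n\le-\theta_n\|s_n-s_{n-1}\|^2-\rho\gamma_n\|b_n-c_n\|^2-2\rho\lambda_n\kappa\|c_n-s^*\|^2,\qquad n\ge 0,
\end{equation*}
with $\theta_n$ as there, so that $\liminf_{n\to\infty}\theta_n>0$ under the stated restrictions on $\alpha_n,\beta_n,\rho$. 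On the other hand, dropping the nonpositive term in the definition of $H_n$ and using the triangle inequality $\|s_n-s^*\|\le\beta_n\|s_n-s_{n-1}\|+\|b_n-c_n\|+\|c_n-s^*\|$ with Young's inequality bounds $H_n$ above by a fixed multiple of $\|s_n-s_{n-1}\|^2+\|b_n-c_n\|^2+\|c_n-s^*\|^2$. Matching the two displays coefficient by coefficient --- this is where $\gamma_n\ge\tfrac{1-\mu^2}{2}$, $\lambda_n\ge\lambda$ and $\liminf\theta_n>0$ enter --- one can choose $q\in(0,1)$ with $1-q$ of order $\rho\epsilon$ so that $H_{n+1}-H_n\le-(1-q)H_n$, hence $H_{n+1}\le qH_n$ and $H_n\le q^{\,n-n_0}H_{n_0}$ for $n\ge n_0$.

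Finally I pass from $\{H_n\}$ to $\{s_n\}$. The definition of $H_n$ gives $\|s_n-s^*\|^2\le H_n+\bigl((1-\rho)\alpha_n+\rho\beta_n\bigr)\|s_{n-1}-s^*\|^2$, and the hypotheses yield $\sigma:=\sup_n\bigl((1-\rho)\alpha_n+\rho\beta_n\bigr)\le\sup_n\alpha_n\le\tfrac{1-2\rho}{1-\rho}<1$. With $d_n=\|s_n-s^*\|^2$, the linear recursion $d_n\le\sigma d_{n-1}+q^{\,n-n_0}H_{n_0}$ (valid for $n\ge n_0$) unrolls to $d_n\le C\bigl(\max\{\sigma,q\}\bigr)^{n}$ for some constant $C$, so $\|s_n-s^*\|\le C^{1/2}\bigl(\max\{\sigma,q\}\bigr)^{n/2}$, which is the claimed $R$-linear convergence. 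The hard part is the coefficient comparison of the third paragraph: the per-step decrease is expressed through the intermediate quantities $\|c_n-s^*\|^2$, $\|b_n-c_n\|^2$ and $\|s_n-s_{n-1}\|^2$ rather than through $\|s_n-s^*\|^2$ itself, so converting it into a decrease proportional to $H_n$ forces the Young/triangle weights to be calibrated precisely against $\rho$, $\alpha_n$, $\beta_n$ and the limiting stepsize, and this calibration is exactly what dictates $0\le\rho\le\tfrac12$ together with $0\le\beta_n\le\alpha_n\le\min\{\tfrac{1-2\rho}{1-\rho},\tfrac{\rho}{1-\rho},\tfrac{\rho\epsilon}{1-\rho\epsilon}\}$, $\epsilon=\min\{\tfrac{1-\mu^2}{2},\tfrac{\mu\kappa}{L}\}$.
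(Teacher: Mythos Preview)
Your approach diverges from the paper's at the point where you obtain the sharpened estimate $\|e_n-s^*\|^2\le\|b_n-s^*\|^2-\gamma_n\|b_n-c_n\|^2-2\lambda_n\kappa\|c_n-s^*\|^2$. You keep the two negative terms separate and try to show that the three--term Lyapunov $H_n$ from the monotonicity lemma contracts geometrically, then transfer that decay to $\|s_n-s^*\|^2$ via a secondary linear recursion. The paper instead combines the two gains immediately: using $\|c_n-b_n\|^2+\|c_n-s^*\|^2\ge\tfrac12\|b_n-s^*\|^2$ it collapses both into the single inequality $\|e_n-s^*\|^2\le(1-\epsilon)\|b_n-s^*\|^2$ for $n$ large, and then works with the simpler quantity $D_n=\|s_n-s^*\|^2+\|s_n-s_{n-1}\|^2$. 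Re-running the one-step expansion with the factor $(1-\epsilon)$ in front of the $\|b_n-s^*\|^2$ term yields $D_{n+1}\le\xi_n D_n$ with $\xi_n=(1-\rho)(1+\alpha_n)+\rho(1-\epsilon)(1+\beta_n)$; the bound $\alpha_n\le\frac{\rho\epsilon}{1-\rho\epsilon}$ is precisely what forces $\xi_n<1$, while $\alpha_n\le\frac{1-2\rho}{1-\rho}$ guarantees $\frac{1-\rho}{\rho}(1-\alpha_n)\ge1$. This gives a clean Q-linear contraction of $D_n$, and hence R-linear convergence of $\|s_n-s^*\|$, with no secondary recursion needed.

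Your route is plausible but, as written, has real gaps. First, you invoke $\liminf\theta_n>0$ ``under the stated restrictions,'' but the lemma you are quoting established $\theta_n>0$ under a \emph{different} hypothesis ($\rho<\tfrac{1}{1+\delta}$ with $\delta>2$, hence $\rho<\tfrac13$), not under $\rho\le\tfrac12$; positivity of $\theta_n$ would have to be re-verified from scratch under the present assumptions. Second, the ``coefficient matching'' is the crux of your argument and you never actually carry it out: you assert that one can choose $q\in(0,1)$ with $1-q$ of order $\rho\epsilon$, and that this calibration ``exactly dictates'' the three upper bounds on $\alpha_n$, but neither claim is verified---in particular you do not explain which inequality forces the specific bound $\alpha_n\le\frac{\rho\epsilon}{1-\rho\epsilon}$. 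Third, $H_n$ is not sign-definite, so passing from $H_{n+1}-H_n\le -(1-q)H_n$ to $H_n\le q^{n-n_0}H_{n_0}$ needs the extra care of replacing $H_n$ by $\max(H_n,0)$. None of these is necessarily fatal, but the paper's direct contraction of $\|e_n-s^*\|^2$ against $\|b_n-s^*\|^2$ sidesteps all of them and ties each parameter restriction to an explicit inequality.
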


\begin{proof}
Let $s^* \in VI(F,\mathcal{K}),$ then $\langle Fs^*, p - s^* \rangle \geq 0$ for all $p \in \mathcal{K}$. Hence $\langle Fp, p -s^* \rangle \geq \kappa \|p - s^*\|^2$ for all $p \in \mathcal{K}.$ Consequently
\begin{eqnarray}
    \langle Fc_n, s^* - e_n \rangle &=& \langle Fc_n, s^* - c_n \rangle + \langle Fc_n, c_n - e_n \rangle \nonumber\\
    &\leq& - \kappa \|c_n - s^* \|^2 + \langle Fc_n, c_n - e_n \rangle . \nonumber
\end{eqnarray}
From Lemma \ref{lem2}, we see that
\begin{equation}
    \|e_n -s^* \|^2 \leq \|b_n - s^* \|^2 - \left( 1 - \frac{\mu^2\lambda_n^2}{\lambda_{n+1}^2} \right)\|c_n - b_n \|^2 - 2\kappa \lambda_n \|c_n - s^* \|^2. \nonumber
\end{equation}
Note that $\frac{\lambda_n}{\lambda_{n+1}} \to 1,$ then $1 - \frac{\mu^2\lambda_n^2}{\lambda_{n+1}^2} \to 1 - \mu^2 >0$ and $\kappa\lambda_n \to \kappa\lambda.$ Also, since $\lambda \geq \min \left\{ \lambda_0, \frac{\mu}{L}, \right\}$ and $\epsilon = \frac{1}{2}\min \left\{ 1-\mu^2, \frac{2\mu\kappa}{L}\right\},$ then for some $n \geq N,$ we get
\begin{eqnarray}
    \|e_n - s^* \|^2 &\leq& \|b_n-s^*\|^2 - \epsilon(\|c_n - b_n\|^2 + \|c_n - s^* \|^2) \nonumber\\
    &\leq& \|b_n - s^* \|^2 - \epsilon\|b_n - s^*\|^2 = (1-\epsilon)\|b_n - s^*\|^2. \label{eq11b}
\end{eqnarray}
Hence
\begin{eqnarray}
    \|s_{n+1}-s_n\|^2 &\leq& (1-\rho)\|a_n - s^*\|^2 + \rho(1-\epsilon)\|b_n - s^*\|^2 - \frac{1-\rho}{\rho}\|s_{n+1}-a_n \|^2 \nonumber\\
    &\leq& \left( (1-\rho)(1+\alpha_n) + \rho(1-\epsilon)(1+\beta_n) \right)\|s_n - s^*\|^2 \nonumber\\
    && - \left( (1-\rho)\alpha_n + \rho(1-\epsilon)\beta_n \right)\|s_{n-1}-s^*\|^2 + ((1-\rho)(1+\alpha_n)\alpha_n +\nonumber\\
    &&\rho(1-\epsilon)(1+\beta_n)\beta_n - (\alpha_n^2 - \alpha_n)\frac{1-\rho}{\rho})\|s_n - s_{n-1}\|^2 \nonumber\\
    && - \frac{1-\rho}{\rho}(1- \alpha_n)\|s_{n+1}-s_n\|^2.\nonumber
\end{eqnarray}
This implies that
\begin{align}
    &\|s_{n+1}-s^*\|^2 + \frac{1-\rho}{\rho}(1-\alpha_n)\|s_{n+1}-s_n\|^2 \nonumber\\
    & \qquad\leq \left( (1-\rho)(1+\alpha_n) + \rho(1-\epsilon)(1+\beta_n) \right)\|s_n - s^*\|^2 \nonumber\\
    & \qquad + ((1-\rho)(1+\alpha_n)\alpha_n + \rho(1-\epsilon)(1+\beta_n)\beta_n - (\alpha_n^2 - \alpha_n)\frac{1-\rho}{\rho})\|s_n - s_{n-1}\|^2. \label{s5}
\end{align}
Let $\xi_n = \left( (1-\rho)(1+\alpha_n) + \rho(1-\epsilon)(1+\beta_n) \right)$ and $\gamma_n =  ((1-\rho)(1+\alpha_n)\alpha_n + \rho(1-\epsilon)(1+\beta_n)\beta_n - (\alpha_n^2 - \alpha_n)\frac{1-\rho}{\rho}).$ Observe that $\xi_n <1$ and $\frac{\gamma_n}{\xi_n} < 1$. Also, since $0 < \alpha_n \leq \frac{1-2\rho}{1-\rho}$, thus $\frac{1-\rho}{\rho}(1-\alpha_n)>1$. 
 Then, from \eqref{s5}, we have
\begin{eqnarray}
    \|s_{n+1}-s^*\|^2 + \|s_{n+1}-s_n\|^2 &\leq& \|s_{n+1} - s^*\|^2 + \frac{1-\rho}{\rho}(1-\alpha_n)\|s_{n+1} - s_n\|^2 \nonumber\\
    &\leq& \gamma_n (\|s_n - s^* \|^2 + \|s_n - s_{n-1}\|^2) \quad \forall n \geq N. \nonumber
\end{eqnarray}
This implies that
\begin{eqnarray}
    \|s_{n+1}-s^*\|^2 &\leq& \|s_{n+1}-s^*\|^2 + \|s_{n+1}-s_n\|^2 \nonumber\\
    &\leq& \gamma_n (\|s_n - s^* \|^2 + \|s_n - s_{n-1}\|^2) \nonumber\\
    &\vdots& \nonumber\\
    &\leq& \gamma_{N+1} (\|s_{N+1}-s^*\|^2 + \|s_{N+1}-s_{N}\|^2) \nonumber\\
    &\leq& \gamma_{N+1}C, \nonumber
\end{eqnarray}
for some $C >0$, which implies that $\{s_n\}$ converges R-linearly to $s^*.$
\end{proof}

\begin{remark}(Inertial versus relaxation) The inequality $0 \leq \beta_n \leq \alpha_n \leq \min \left\{ \frac{1-2\rho}{1-\rho}, \frac{\rho}{1-\rho}, \frac{\rho\epsilon}{1-\rho\epsilon} \right\}$ illustrates the essential balance between the inertial and relaxation parameters. This expression closely resembles the one derived by Attouch and Cabot \cite[Remark 2.13]{Att&Cabot}, with the only distinction being the inclusion of an additional factor that incorporates the parameter $\epsilon = \frac{1}{2}\min \left\{1-\mu^2, \frac{2\mu\kappa}{L} \right\}.$
To clarify this concept, let's temporarily set $\beta_n =\alpha_n = \alpha$. In this case, the upper bound for the inertial parameter becomes $\bar{\alpha}(\rho,\epsilon) = \min \left\{ \frac{1-2\rho}{1-\rho}, \frac{\rho}{1-\rho}, \frac{\rho\epsilon}{1-\rho\epsilon} \right\}.$ The smallest value for the limit of the sequence of inertial parameters is achieved when $\rho = \{0,0.5\}$ and $\alpha_{\min} = 0.$ However, $\bar{\alpha}(\rho,\epsilon) \nearrow \alpha_{\max}\in (0,1)$ as it depends on the specific values of $\rho$ and $\epsilon.$ You can observe the trade-off between the inertial and relaxation parameters for two particular choices of $\epsilon$ in Figure \ref{fig_trade}.

\begin{figure}[h!]
    \centering
    \includegraphics[width = 7.0cm]{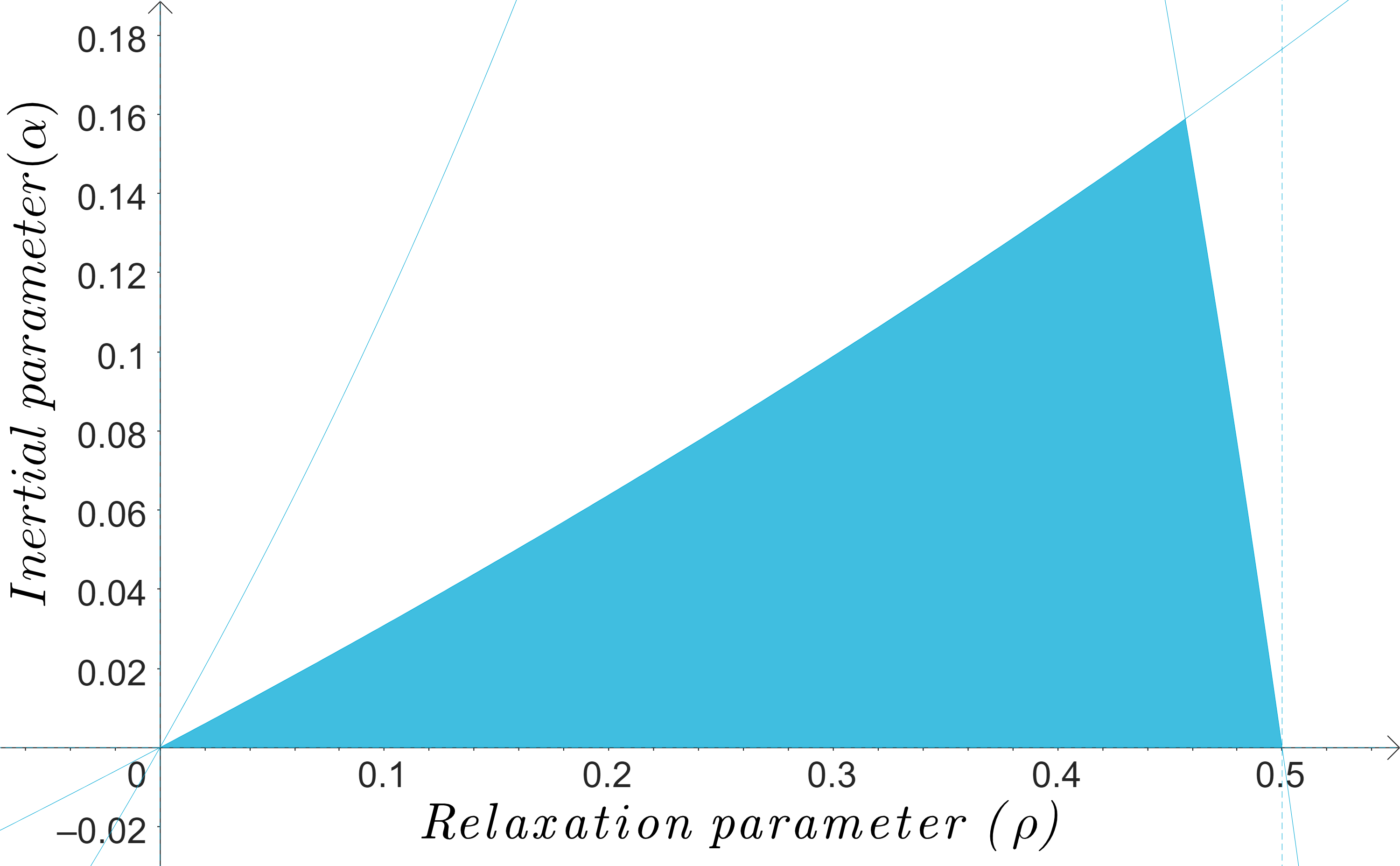}
     \includegraphics[width = 7.0cm]{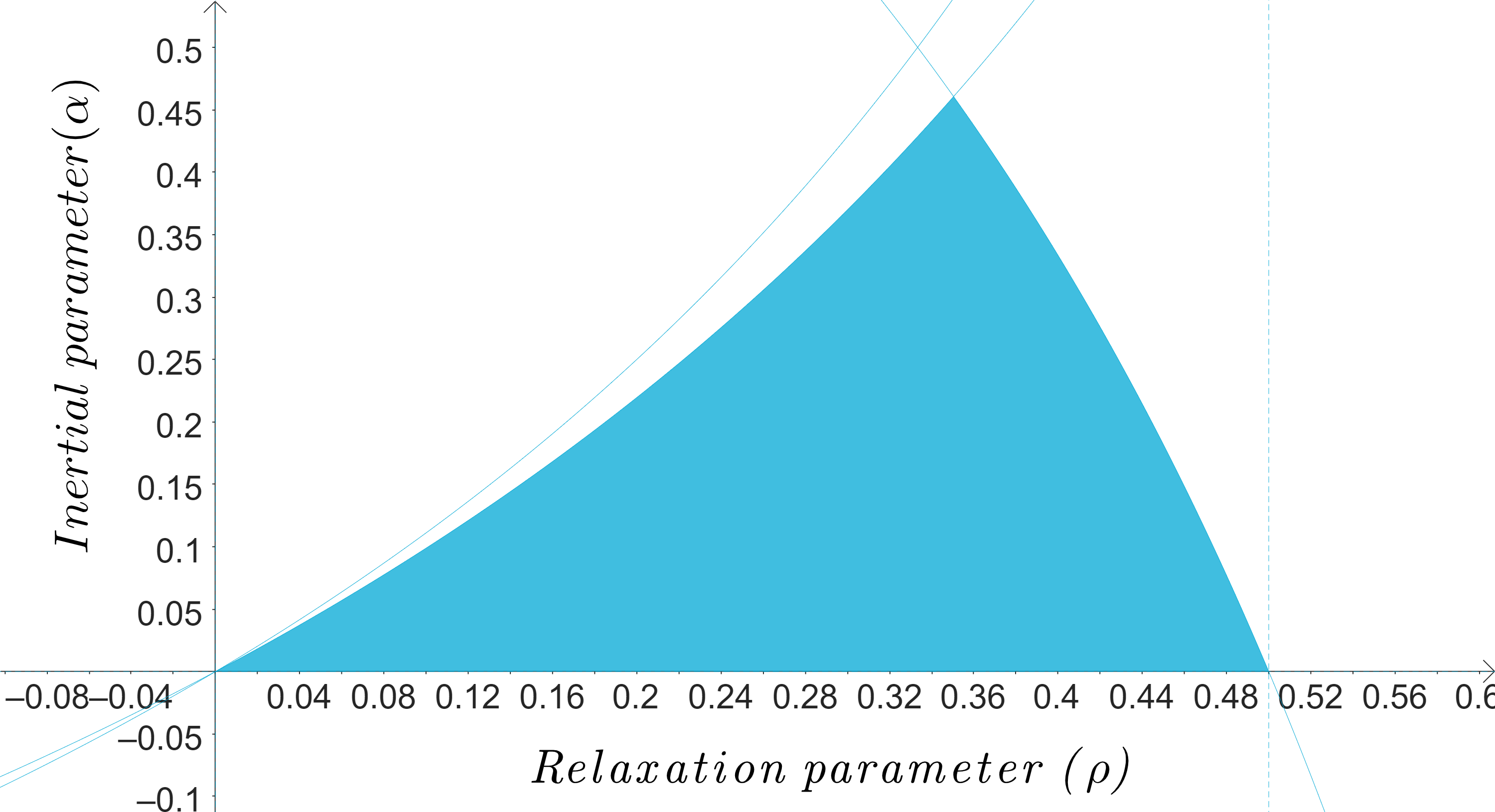}
    \caption{Trade-off between inertial and relaxation parameters for $\epsilon = 0.3$ (Left) and $\epsilon = 0.9$ (Right).}
    \label{fig_trade}
\end{figure}
    
\end{remark}
 
 \section{Numerical experiments}
 In this section, in order to illustrate the computational performance of Algorithm \ref{game}, we divided the numerical experiments into two subsections. The first subsection gives a simulation test for a particular example of a variational inequality problem and the second subsection is devoted to an application of the algorithm to regression problems. The code for the first experiment is written by MATLAB 2022b and the code for the second experiment is implemented using a Jupyter notebook on a PC with Intel Core i7, 3.3 GHz Dual-Core processor, and 16GB RAM. The performance of Algorithm \ref{game} is compared with DIEM, IREM, REM and EM which were derived from Algorithm \ref{game} (see Remark \ref{Rmk1}) corresponding to instances of the proposed method in the literature.

\subsection{Comparison of algorithms}\label{firstExp}
In the experiment, we would like to investigate the effect of the parameters on the performance of the proposed Algorithm \ref{game}. To do this, we consider the following classical academic example of variational inequalities and apply the proposed algorithm.
\begin{experiment}\label{ex1}
    Let $F:\R^N \to \R^N$ be defined by $F(x) = \bar{M}x + \xi,$ with $\bar{M} = RR^\top + S+ D$, where $R$ is a $N \times N$ matrix, $S$ is a $N \times N$ skew-symmetric matrix and $D$ is a $N \times N$ diagonal matrix whose diagonal entries are positive. Consequently, $\bar{M}$  is a positive define matrix, and $\xi$ is any vector in $\R^N$. We define the set $\mathcal{K}$ by $$\mathcal{K}:= \{x \in \R^N : \; Ax \leq b\},$$
    where $A$ is a matrix of size $L \times N$ and $b \in \R_{+}^L$. It is easy to show that $F$ is monotone and Lipschitz continuous with Lipschitz constant $\|\bar{M}\|. $ Also, for $\xi = 0,$ the unique solution of the corresponding variational inequalities is $x^\dagger =0.$ 

    Now, we generate the matrix $N,S, D$ randomly such that $S$ is skew-symmetric and $D$ is a positive definite diagonal matrix. The starting points as well as $A$ and $b$ are also generated randomly. Since our intention is to investigate the effect of the inertial parameters on the algorithm, we consider different values of $\rho, \alpha_n$ and $\beta_n$ corresponding to algorithm GAME, DIEM, IREM, REM and EM discussed in Remark \ref{Rmk1} (see Table \ref{tab_para}). More so, the method is tested using four different values of $N$ and $L$. We choose $\mu = 0.4, \lambda_0 = 0.01$ and $\zeta_n = \frac{1}{10n+9}.$ The algorithm is terminated when $\|b_n - c_n \| < 10^{-6}.$ The computational results are shown in Table \ref{tab_comp} and Figure \ref{fig_comp}.

    \begin{table}[h!]
        \begin{center}
        \caption{Value of parameter $\rho$, $\beta_n$ and $\alpha_n$ used for Example \ref{ex1}.}
        \begin{tabular}{l l l l}
        \toprule
            DIEM & $\rho = 1$ & $\alpha_n = 0.5$ & $\beta_n = 0.2$ \\
            \hline
              IREM & $\rho = 0.6$ & $\alpha_n = 0$ & $\beta_n = 0.2$ \\ 
              \hline
              REM & $\rho = 0.6$ & $\alpha_n = 0$ & $\beta_n = 0$ \\ 
              \hline
              EM & $\rho = 1$ & $\alpha_n = 0$ & $\beta_n = 0$ \\ 
              \hline
              GAME & $\rho = 0.6$ & $\alpha_n = 0.5$ & $\beta_n = 0.2$ \\ 
              \bottomrule
        \end{tabular}
        \end{center}
        \label{tab_para}
    \end{table}

    \begin{figure}[h!]
    \centering
    \includegraphics[width = 7.0cm]{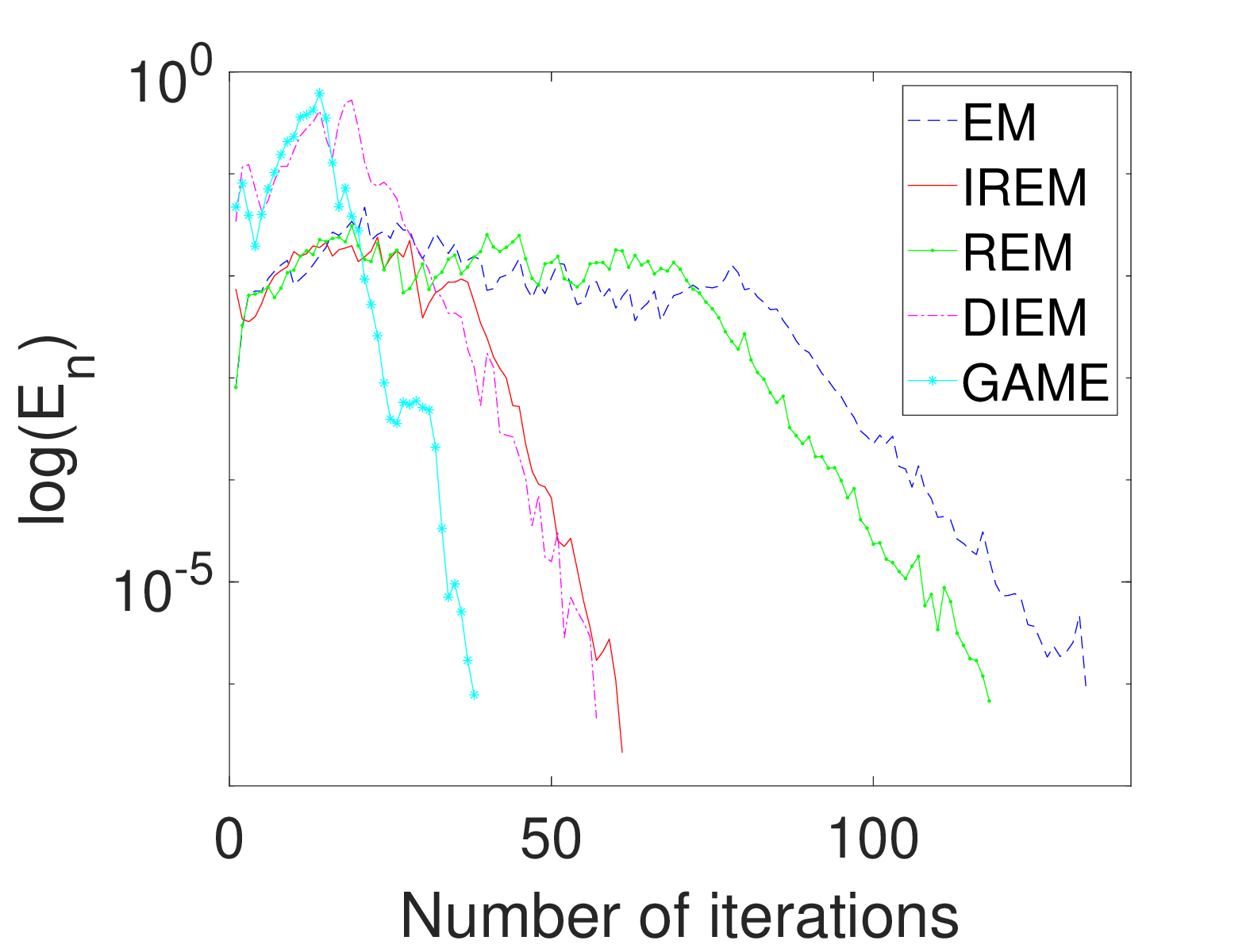}
     \includegraphics[width = 7.0cm]{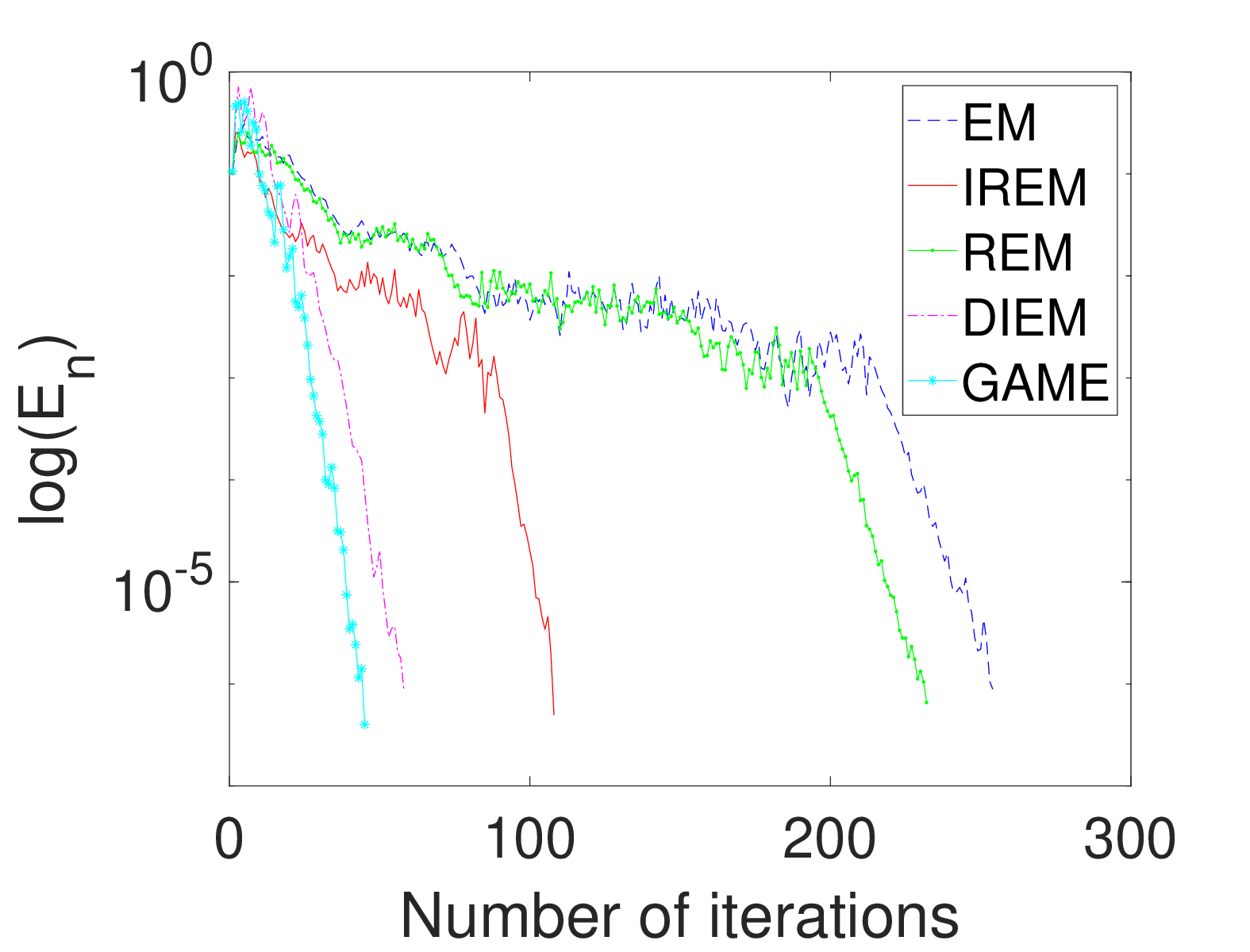}\\
     \includegraphics[width = 7.0cm]{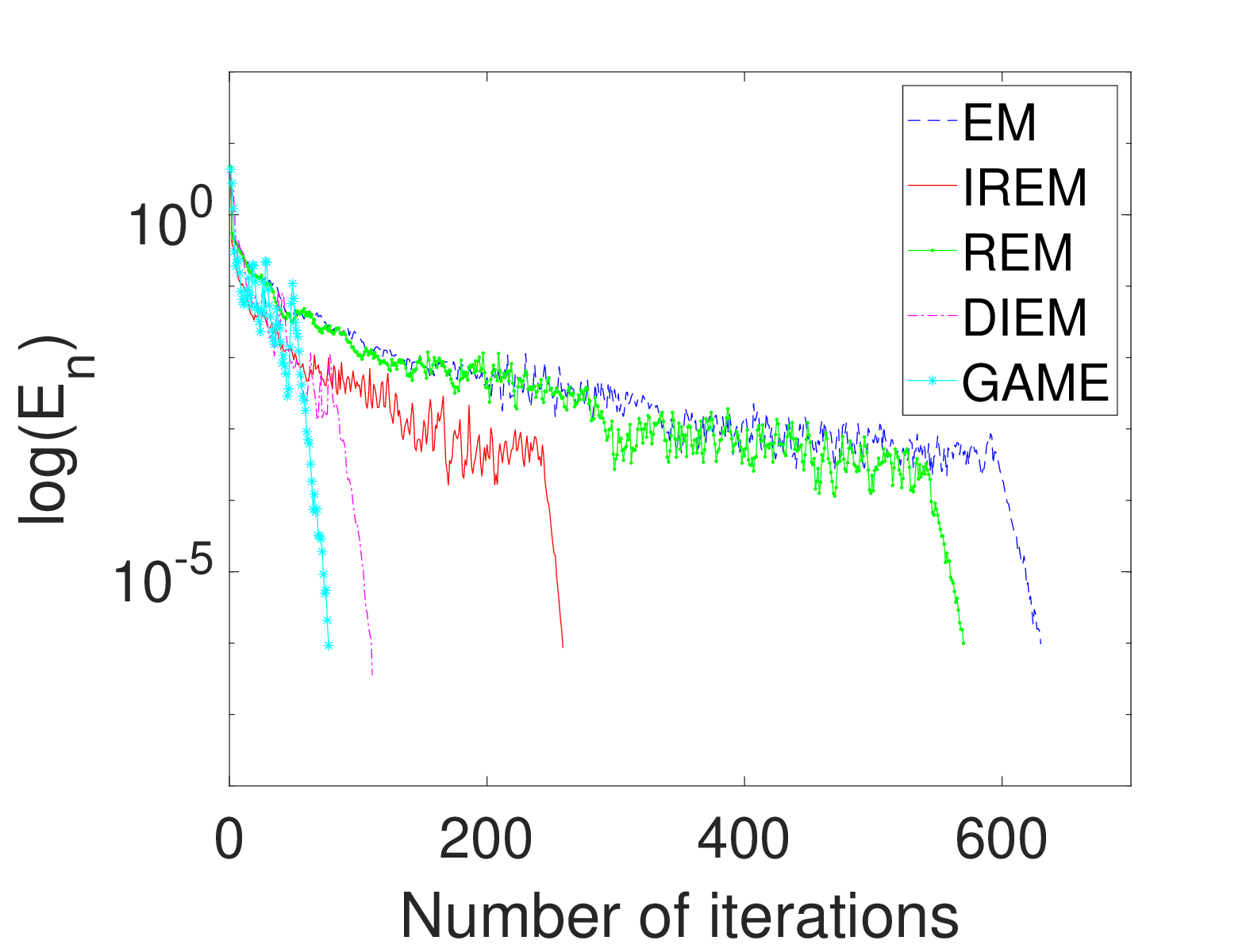}
     \includegraphics[width = 7.0cm]{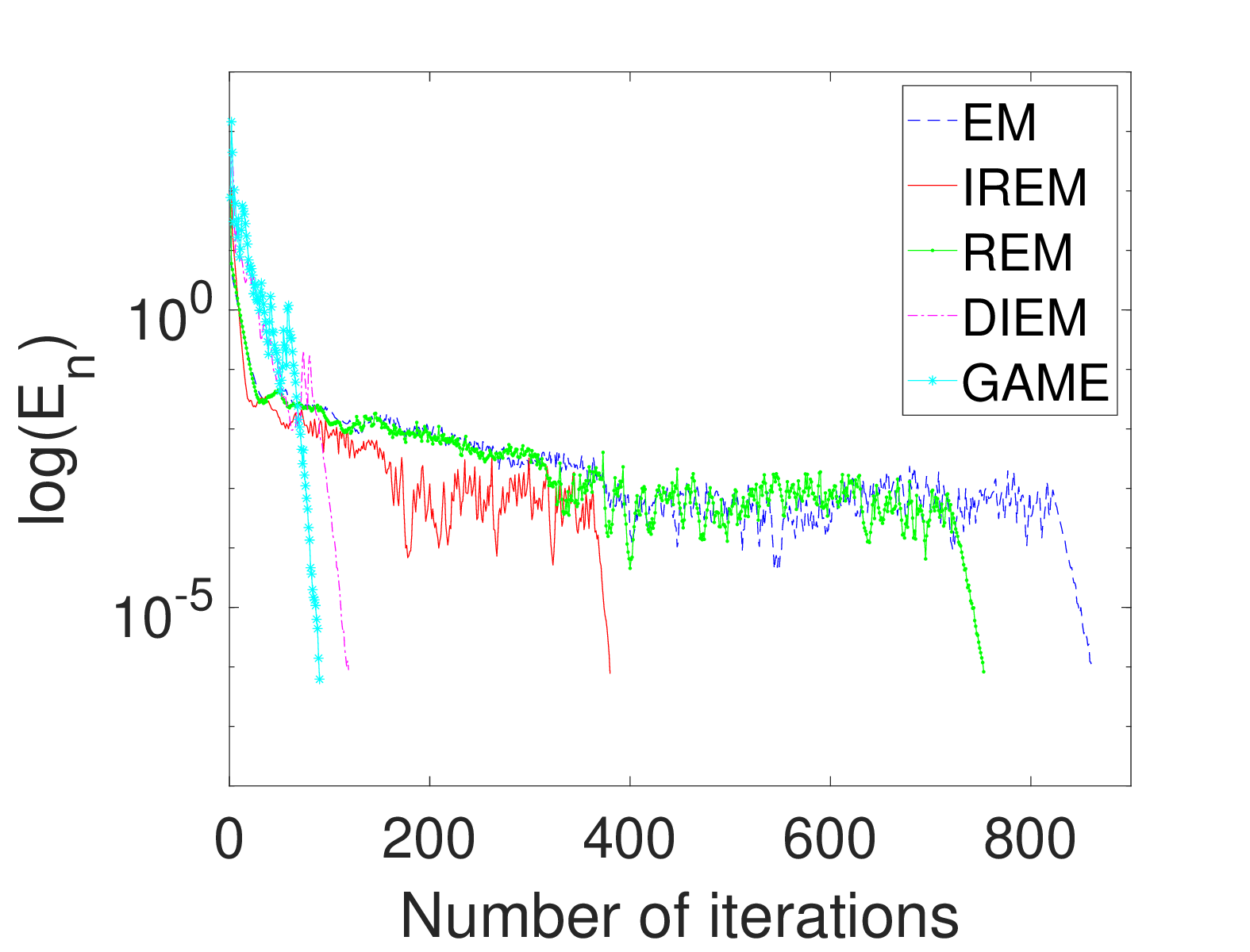}
    \caption{Behaviour of the term $``En = \|c_n - b_n \|"$ for four different values of $N$ and $L$. Top Left: $N = 10, L=5$, Top Right: $N = 20, L=10$, Bottom Left: $N=30, L=15$ and Bottom Right: $N=50, L = 20.$}
    \label{fig_comp}
\end{figure}

\begin{table}[h!]
    \centering
    \begin{tabular}{c| c c c c c c}
    \toprule
       Cases  & \noindent & EM & IREM & REM & DIEM & GAME   \\
       \midrule
       $N = 10, L = 5$ &Iter & 133 & 61 & 118 & 57 & 38 \\
        & Time & 0.0157 & 0.0051 & 0.0057 & 0.0042 & 0.0023 \\
        \hline
       $N = 20, L = 10$ &Iter & 254 & 108 & 232 & 58 & 45 \\
        & Time & 0.0228 & 0.0071 & 0.0114 & 0.0051 & 0.0039 \\
        \hline
       $N = 30, L = 15$ &Iter & 630 & 259 & 570 & 114 & 77 \\
        & Time & 0.0706 & 0.0216 & 0.0415 & 0.0102 & 0.0080 \\
         \hline
       $N = 50, L = 20$ &Iter & 861 & 380 & 753 & 119 & 90 \\
        & Time & 0.1134 & 0.0451 & 0.0823 & 0.0153 & 0.0126 \\
       \bottomrule
    \end{tabular}
    \caption{Computational results of iteration number and CPU time (sec) for Example \ref{ex1}.}
    \label{tab_comp}
\end{table}
    
\end{experiment}

\subsection{Training of ELM based on Algorithm GAME \ref{game}}
ELM exhibits exceptionally rapid training speed and possesses superior pattern reconstruction capabilities compared to backpropagation-based learning. The model's formulation, as expressed in the optimization problem detailed in \eqref{min}, can be effectively tackled through variational inequality techniques. Several approaches are available for solving the problem \eqref{min}, including the utilization of the Moore-Penrose inverse, orthogonal projection, and proper decomposition as mentioned in reference \cite{SunYang}. However, in practical scenarios, the number of independent variables, denoted as $L$, in the linear system $\mathbf{H}\beta = \textbf{X}$ typically exceeds the number of data points, denoted as $N$, which can lead to issues related to overfitting. Conversely, when the number of hidden neurons $L$ is limited, the model's pattern reconstruction capacity tends to be diminished. Additionally, as the size of the training dataset increases, the computational burden associated with computing the inverse matrix $\mathbf{H}^\dagger$ escalates significantly. To address these challenges, various regularization methods have been introduced in the literature. Among these, the two most commonly employed classical techniques are ridge regression, as detailed in reference \cite{Tikho}, and sparse regularization, as outlined in reference \cite{Tibshi}. Sparse regularization plays a pivotal role in feature selection, a technique that has demonstrated remarkable success in the field of machine learning. This study introduces an unsupervised feature selection framework that seamlessly integrates sparse optimization and signal reconstruction into pattern recognition models. This optimization takes the form
\begin{equation*}
    \underset{\beta}{\min}\; \|\mathbf{H}\beta - X\|_2^2 +  \|\beta\|_{1}.
\end{equation*}
From the description given above, we see that the estimation of $\beta$ and the time required are the main challenges for training the ELM. Our aim in this subsection is to implement the proposed algorithm which reduces the computational time and maintains the accuracy of classical algorithms such as FISTA algorithm \cite{BeckFISTA} for training neural networks. The optimization approach \eqref{min} is also interesting because it can be extended to other regularization terms, which control the variable selection by calculating a derivative in the weakest sense through convex envelopes. In addition, the projection operator $P_{\mathcal{K}}$ is computed by the shrinkage operator $shrink(s,\rho)$ \cite{BeckFISTA}, whose $i$-th element is calculated as follows:
\begin{equation*}
    shrink(s,\rho)_{i} =  shrink(s_{i},\rho) = sign(s_{i})\max \{ |s_{i}|-\rho,0\}, 
\end{equation*}
where $s$ is the initial estimation of the weight of the $\beta$-output layer and $\rho >0.$ The pseudo-code for training of the ELM with the proposed Algorithm \ref{game} is written below.

\begin{algorithm} \label{pseudo_game}
  \caption{Training of the ELM with Algorithm \ref{game}}
  \begin{algorithmic}
  \STATE {\bf Require:}  training set $\{s_{i}\}_{i}^{N}$, maximum number of iterations $K$, stopping criterion $\mathcal{P}$, number of neurons $L$, parameters for Algorithm \ref{game}.
  \begin{itemize}
      \item[1.] The weights $\mathbf{W}$ and biases $\mathbf{b}$ in the hidden layer are randomly and independently assigned, with an orthogonal approach.
      \item[2.] The matrix $\mathbf{H}$ is computed by evaluating the function $\mathbf{g}$ using the terms $(s_{i},\mathbf{W},\mathbf{b}).$
      \item[3.] \textbf{while} $\mathcal{P}$ does not occur \textbf{do}
      \item[4.] Determine the optimal solution for \eqref{min} using the variational inequality approach with the utilization of Algorithm \ref{game}.
      \item[5.] \textbf{end while}
  \end{itemize}
  \end{algorithmic}
\end{algorithm}

In each experiment, the training and test data set were standardized to the range [0,1] using the following equation:
In the hidden layer, the weights and biases are random values generated by means of a uniform distribution in the ranges [-1,1] and [0,1], and the sigmoid $\mathbf{g}(x) = \frac{1}{1+e^{-x}}$ was used as the activation function. The algorithm \ref{game} as well as DIEM, IREM, REM, and EM has the following parameters:


\subsubsection{Description of datasets}
To examine the performance of Algorithm \ref{game}, we used the Boston Housing, Autompg, Bodyfat, Bike sharing and Diabetes datasets which were obtained from different free online repositories. These datasets are briefly described below.
\begin{enumerate}
    \item Boston Housing: This dataset contains 506 rows and 14 columns. The dataset was obtained from Kaggle website and is derived from information collected by the U.S. Census Service concerning housing in the area of Boston, MA.
    \item Autompg: This dataset contains 398 rows and 9 columns. The dataset was obtained from Kaggle website and concerns city-cycle fuel consumption in miles per gallon.
    \item Bodyfat: This dataset contains 252 rows and 15 columns. The data was obtained from kaggle, and it estimates the percentage of body fat determined by underwater weighing for 252 men.
    \item Bike sharing: This dataset contains information about the daily count of rental bikes between 2011 and 2012. The dataset contains 731 rows and 16 columns, and it was obtained from UCI machine learning repository.
    \item Diabetes: This dataset contains 768 rows and 9 columns. The dataset was obtained from Kaggle, and it's objective is to predict on diagnostic measurements whether a patient has diabetes.

\end{enumerate}
Table \ref{dataset} shows the number of training and test data, the number of categories and the size of the input vectors for each of the datasets.
\begin{table}[h!]
    \centering
    \begin{tabular}{ccccc}
         \toprule
         Dataset & \# Training & \# Testing & \# Features \\
         \midrule
         Boston Housing &404&102 & 7 \\
         Autompg &318 &80 & 7 \\
         Bodyfat & 201&51 & 14 \\
         Bike sharing &584 & 147& 15 \\
         Diabetes &614 &154 &8  \\
         \bottomrule
    \end{tabular}
    \caption{Information regarding the datasets}
    \label{dataset}
\end{table}

\subsubsection{Description of evaluation metrics}
To analyze the performance of the proposed algorithm and {other comparison algorithms, we used standard evaluation metrics commonly used to evaluate the performance of machine learning methods under regression tasks. The metrics include} the root mean square $(RMSE)$,  mean absolute error (MAE), R$^2$ score, the ratio of the sum squared error (SSE) to the sum squared deviation of the sample SST $(SSE/SST)$ and the ratio between the interpretable sum deviation SSR and SST $(SSR/SST)$. Their mathematical representation is given by  Table \ref{table_metrics}.
The RMSE and MAE metrics represent the average error between the estimated and actual values.  The lower the values of these measures, the higher the method's efficiency. 
The proportion of the overall variation that the simple linear regression model cannot account for is represented by the SSE/SST ratio. At the same time, SSR/SST is the proportion of explained to total variation.


\begin{table}[tpbh]
    \centering
    \caption{\large{Definition of evaluation metrics for the experiments}}\label{table_metrics}
    \begin{tabular}{cc} 
        \toprule
        Metric & Mathematical Expression \\
        \bottomrule
        RMSE & $\sqrt{ \frac{\sum_{i=1}^{n}(y_{i} - \bar{y}_{i})^2}{n}}$ \\
        MAE & $ \sum_{i=1}^{n} \frac{|y_{i} - \bar{y}_{i}|}{n}$ \\
        SSE/SST & $\dfrac{\sum_{i=1}^{n} (\hat{y}_i - y_i)^2}{\sum_{i=1}^{n} (y_i - \bar{y})^2}$\\
        SSR/SST & $\dfrac{\sum_{i=1}^{n} (\hat{y}_i - \bar{y})^2}{\sum_{i=1}^{n} (y_i - \bar{y})^2}$\\
        \bottomrule
    \end{tabular}
\end{table}
\noindent {Note that in Table \ref{table_metrics}, $y$ and $\bar{y}$ denote the vectors of actual and estimated values, respectively, $y^*$ is the average of the actual values and $n$ is the number of pairs of $y_i$ and $\bar{y}_{i}$.}

\subsubsection{Experiment results}
In addition to the numerical experiment presented in subsection \ref{firstExp}, we conduct several other experiments using real-life datasets from the UCI repository.  These experiments are carried out using a 5-fold cross-validation method, and the average results for each metric are presented. Additionally, the dataset used for these experiments is summarized in Table \ref{dataset}.
From the experiment, Table \ref{tab:RMSE}, \ref{tab:MAE}, \ref{tabSSE} and \ref{tabSSR}  shows the results obtained by evaluating the RMSE, MAE, SSE/SST, and SSR/SST respectively by the methods.

\begin{table}[]
    \caption{\large{Results of RMSE obtained by the Algorithms for five datasets.}}
    \resizebox{\columnwidth}{2.6cm}{
    \begin{tabular}{ccccc}
    \toprule 
    \multicolumn{5}{c}{RMSE} \\
    \midrule 
    Dataset & GAME & FISTA & IREM & REM \\
    \midrule
    Boston Housing & 0.5538 & 0.3885 & 0.5220 & 0.5220 \\
    Autompg & 0.1116 & 0.0920 & 0.1042 & 0.1042 \\
    Bodyfat & 0.1039  &  0.0434& 0.1077 & 0.1077 \\
    Bike sharing & 0.0857 & 0.0105& 0.0533 &0.0533 \\
    Diabetes &0.4182 & 0.4168 & 0.4081 & 0.4081 \\
    \bottomrule
    \end{tabular}}
    \label{tab:RMSE}
\end{table}
After examining the results, it is evident that the proposed GAME demonstrates performance on par with other state-of-the-art algorithms in terms of RMSE. However, when we consider the time required for each algorithm to achieve its RMSE value, it becomes apparent that the proposed GAME outperforms the comparison algorithms significantly. These findings are presented in Table \ref{tabNum}. For example, with the Diabetes dataset as an illustration. The proposed GAME achieved an RMSE of 0.3654 within 113 iterations in just 0.07681 seconds. In contrast, FISTA took over 10,000 iterations and 9.3160 seconds to attain an RMSE score of 0.2716. Additionally, IREM yielded a 0.3222 RMSE score within 1599 iterations and 2.0488 seconds, while REM also recorded a 0.3222 RMSE with 1599 iterations, completed in 2.12139 seconds. All the iterations and times taken by other comparison algorithms are much higher than that of the proposed GAME algorithms. Similar patterns can be observed in other tables. These results highlight that the proposed GAME algorithms demonstrate comparable performance compared with other state-of-the-art algorithms, but the proposed GAME algorithm outperforms the other comparison algorithms when considering the number of iterations and the time required to achieve the desired scores of the evaluation metrics. Detailed information is provided in Table \ref{tabNum}.

\begin{table}[ht]
    \centering
    \caption{\large{Results of MAE obtained by the Algorithms for five datasets.}}
    \begin{tabular}{cccccc}
    \toprule 
    \multicolumn{5}{c}{MAE} \\
    \midrule 
    Dataset & GAME & FISTA & IREM & REM \\
    \midrule
    Boston Housing & 0.2778 & 0.1791 & 0.2641 & 0.2641 \\
    Autompg & 0.0671 & 0.0447 & 0.0615 & 0.0616 \\
    Bodyfat & 0.1088 &  0.0610& 0.0781&  0.0785 \\
    Bike sharing &0.0643 &0.0246 &0.0300 &0.0300 \\
    Diabetes &0.3454 & 0.2716 & 0.3222 &0.3222 \\
    \bottomrule
    \end{tabular}
    \label{tab:MAE}
\end{table}


\begin{table}[]
    \centering
    \caption{\large{Results of SSE/SST obtained by the Algorithms for five datasets.}}
    \begin{tabular}{ccccc}
    \toprule 
    \multicolumn{5}{c}{SSE/SST} \\
    \midrule 
    Dataset & GAME & FISTA & IREM & REM \\ 
    \midrule
    Boston Housing & 0.3125 & 0.1612 & 0.2779 & 0.2779 \\  
    Autompg &  0.2897 & 0.1853 & 0.2744 & 0.2743  \\  
    Bodyfat & 1.5525 &0.0918 &0.3461 & 0.3460 \\  
    Bike sharing & 0.3640 & 0.0323 & 0.0571 & 0.0571\\  
    Diabetes &0.9250 &0.7666 &0.7342 &0.7342 \\  
    \bottomrule
    \end{tabular}
    \label{tabSSE}
\end{table}
         
\begin{table}[]
    \centering
    \caption{\large{Results of SSR/SST obtained by the Algorithms for five datasets.}}
    \begin{tabular}{ccccc}
    \toprule 
    \multicolumn{5}{c}{SSR/SST} \\
    \midrule 
    Dataset & GAME & FISTA & IREM & REM \\ 
    \midrule
    Boston Housing & 0.3125 & 0.1612 & 0.2779 & 0.2779 \\  
    Autompg & 0.2897 & 0.1353 & 0.2744 & 0.2743\\  
    Bodyfat & 1.5525 & 0.0718 & 0.3461 & 0.3460 \\  
    Bike sharing & 0.3640 & 0.0623 &  0.0871 & 0.0871 \\  
    Diabetes & 0.9250 & 0.7666 & 0.7342 & 0.7342  \\ 
    \bottomrule
    \end{tabular}
    \label{tabSSR}
\end{table}

\begin{table}[]
    \caption{\large{Comparison of Number of iterations and time of execution by each algorithm.}}
   \resizebox{\columnwidth}{!}{
    \begin{tabular}{c|cc|cc|cc|cc} 
    \toprule 
    Dataset & \multicolumn{2}{c}{GAME} & \multicolumn{2}{c}{FISTA} & \multicolumn{2}{c}{IREM } & \multicolumn{2}{c}{REM}\\ 
        & Iter & Time & Iter & Time & Iter & Time  & Iter & Time   \\ 
    \midrule
     Boston Housing & 166 & 0.12957 & $>$10000 & 5.69053 & 5826 & 4.67538 & 5830 & 4.80806 \\ 
    Autompg & 148 & 0.08541 & $>$10000 & 4.89166 & 150 & 0.10100 & 150 & 0.10033 \\  
    Bodyfat & 141 &0.05733 & $>$10000  &4.21068 & 406 & 0.25426 & 406 & 0.21856\\  
    Bike sharing & 186 & 0.14609 & $>$10000& 8.54421 &565 & 0.65873 & 565& 0.63204  \\  
    Diabetes &113 & 0.07681 &$>$10000 & 9.31650& 1599 &2.04488 & 1599& 2.12139\\  
    \bottomrule
    \end{tabular}}
    \label{tabNum}
\end{table}

\section{Conclusion}
This paper proposed a general adaptive accelerated method (GAME) for solving the variational inequalities problems which is reformulated from a structured optimization problem obtained in the modeling of extreme learning machine network problems. The algorithm consists of an inertial technique and a self-adaptive stepsize which is updated at every iteration in order to avoid a prior guess of the Lipschitz constant of the smooth function of the model. Typically, such a prior guess is too small or too large which affects the rate of convergence in classical algorithms such as the popular fast iterative shrinkage thresholding algorithm (FISTA). The performance of the proposed method is tested with the FISTA and other state-of-art algorithms which are obtained from the GAME algorithm. The numerical results show that the proposed algorithm has significant advantages over the existing methods in terms of the number of iterations and CPU time while training the algorithms on five datasets.















\noindent {\bf {Competing interest:}} The authors declare that there is not competing interest on the paper.

\noindent {\bf {Authors' contributions:}} All authors worked equally on the results and approved the final manuscript.

 \

\end{document}